\numberwithin{equation}{section}
\numberwithin{figure}{section}
\theoremstyle{plain}
\newtheorem{thm}{Theorem}
  \theoremstyle{definition}
  \newtheorem{defn}[thm]{Definition}
  \theoremstyle{plain}
  \newtheorem{prop}[thm]{Proposition}
  \theoremstyle{plain}
  \newtheorem{cor}[thm]{Corollary}
  \theoremstyle{remark}
  \newtheorem{rem}[thm]{Remark}
 \theoremstyle{definition}
  \newtheorem{example}[thm]{Example}
  \theoremstyle{plain}
  \newtheorem{lem}[thm]{Lemma}
\begin{document}

\title{The regularity of the $\eta$ function for the Shubin calculus}
\author{Pedro T. P. Lopes}
\email{pplopes@ime.usp.br; dritao@yahoo.com}
\address{Instituto de Matem\'atica e Estat\'istica, Universidade de S\~ao Paulo,  05508-090, S\~ao Paulo, SP, Brazil}
\date{\today}

\begin{abstract}
We prove the regularity of the $\eta$ function for classical pseudodifferential
operators with Shubin symbols. We recall the construction of complex
powers and of the Wodzicki and Kontsevich-Vishik functionals for classical
symbols on $\mathbb{R}^{n}$ with these symbols. We then define the
$\zeta$ and $\eta$ functions associated to suitable elliptic operators.
We compute the $K_{0}$ group of the algebra of zero-order operators
and use this knowledge to show that the Wodzicki trace of the idempotents
in the algebra vanishes. From this, it follows that the $\eta$ function
is regular at $0$ for any self-adjoint elliptic operator of positive
order.
Keywords: Pseudodifferential operators, K-Theory of $C^{*}$-algebras.
\end{abstract}

\maketitle

In this paper, we present a study of the $\eta$ and $\zeta$ functions
constructed for the class of classical elliptic symbols on $\mathbb{R}^{n}$
defined in Chapter 4 of Shubin \cite{Shubin}. These important
spectral functions can be associated with pseudodifferential
operators on a manifold, allowing the study of spectral properties
of the operators and carrying a good amount of geometric information.
The $\eta$ function was first defined by Atiyah, Patodi and Singer \cite{Atiyah75}. Its regularity at the origin was proved for particular cases one year later \cite{Atiyah76}, as part of their study of first order boundary problems. The complete proof of its
regularity was accomplished by Gilkey \cite{Gilkey}. Right after that,
Wodzicki gave an alternative analytical
proof in \cite{Wodzicki1,Wodzicki2}.

It is well known that the classes of Shubin symbols reproduce many of the properties of
the pseudodifferential calculus on compact manifolds. Furthermore many results
obtained recently about noncommutative aspects of global symbols, notably
those contained in the Nicola and Rodino's book \cite{NicolaRodino}, also
lead us to consider the natural question of the regularity of the
$\eta$ function for self-adjoint elliptic operators in this calculus.

We follow closely the ideas of Wodzicki and Shubin, explained clearly
also by Ponge \cite{Ponge}. It consists of the proof of the following
expression \[
\mbox{res}_{s=0}\eta(op(a),s)=2i\pi Res(\Pi_{\theta,\theta'}(a)),\]
where $\mbox{res}_{s=0}\eta(op(a),s)$ denotes the residue of the
meromorphic function $s\mapsto\eta(op(a),s)$, for a given self-adjoint
elliptic symbol of positive order $a$, $Res$ denotes the Wodzicki trace, see section 5.1 of \cite{NicolaRodino}, and $\Pi_{\theta,\theta'}(a)$
are the so-called {}``sectorial projections''. We can prove using only elementary $K$-Theory
of $C^{*}$-algebras that
the Wodzicki trace of any idempotent pseudodifferential operator in
the Shubin class is zero. Hence $\eta$ is a regular function at the
origin.

This article is organized as follows. We first recall the basic definitions and properties of the Shubin calculus and show how to determine the norm of zero order operators modulo compacts in terms of their principal symbols. We then define a Fr\'echet topology
for classical Shubin pseudodifferential operators and show that
the classical Shubin operators of zero order form a $*$-Fr\'echet algebra
contained in $\mathcal{B}(L^{2}(\mathbb{R}^{n}))$, the bounded operators
on $L^{2}(\mathbb{R}^{n})$, that is closed under the analytic functional
calculus.

After that we recall the definition of complex powers and the $\zeta$
function of Shubin pseudodifferential operators. The $\zeta$ function
provides a link between the $\eta$ function and the Wodzicki trace.
In order to define the $\zeta$ function we need to use complex
powers and a linear functional defined on the space of classical Shubin symbols with order different from $-2n+j$, $j\in\mathbb{N}_0$,
that extends in a certain way the trace of trace class pseudodifferential operators in
Hilbert spaces. We call this function, which appears also in the work
of Maniccia, Schrohe and Seiler \cite{DeterminantesSG} under the name
of ``finite-part integral'', the Kontsevich-Vishik functional
on classical Shubin operators, because it is similar to the function
defined in \cite{Vishik}.

We then prove that the Wodzicki trace of every pseudodifferential idempotent with symbol in the Shubin class 
is zero. In order to do that we compute the $K$-theory of the $C^{*}$-algebra
generated by the zero order operators and use standards arguments
of the theory. We use in particular that the $K$-theory of this $C^{*}$-algebra
is the same as the $K$-theory of the $*$-Fr\'echet algebra of the zero
order operators, because the latter is closed under the analytic functional
calculus. This implies the desired regularity of the $\eta$ function.

\section{Basic definitions.}

In this section we will recall some of the main properties of the
Shubin calculus. The basic facts about Shubin operators can be found
in details in the Chapter 4 of the book of Shubin \cite{Shubin} and
in Chapters 1 and 2 of the book of Nicola and Rodino \cite{NicolaRodino}.

In this paper, we fix a strictly positive $C^{\infty}$ function $[.]:\mathbb{R}^{n}\to\mathbb{R}$
such that $[x]=|x|$ for $|x|\ge1$. A zero excision function in $C^{\infty}(\mathbb{R}^n)$ is a function that assumes values on $[0,1]$ and which is zero in a neighborhood of the origin and it is equal to 1 outside a compact set. If $\mathcal{H}$ is a Hilbert
space, we use the notation $\mathcal{B}(\mathcal{H})$ to indicate
the set of bounded operators in $\mathcal{H}$. $\left\Vert .\right\Vert _{\mathcal{B}(\mathcal{H})}$
indicates the norm in this space. In particular $\mathcal{B}(\mathbb{C}^{q})$
coincides with the set of complex $q\times q$ matrices. We also use
$\mathbb{N}_{0}=\{0,1,2,...\}$ and $\mathbb{N}=\{1,2,3,...\}$.
\begin{defn}
\emph{(Shubin symbols)} Let $m\in\mathbb{C}$. We define $\Gamma^{m}(\mathbb{R}^{n},\mathcal{B}(\mathbb{C}^{q}))$
as the space of $C^{\infty}$ functions $a:\mathbb{R}^{n}\times\mathbb{R}^{n}\to\mathcal{B}(\mathbb{C}^{q})$,
such that for all $\alpha$ and $\beta\in\mathbb{N}_{0}^{n}$, there
exists a constant $C_{\alpha\beta}>0$ such that the following inequality
holds for all components $a_{ij}$ of $a$:\[
\left|\partial_{x}^{\beta}\partial_{\xi}^{\alpha}a_{ij}(x,\xi)\right|\le C_{\alpha\beta}[(x,\xi)]^{Re(m)-|\alpha|-|\beta|},\]
These estimates provide a Fréchet space structure to $\Gamma^{m}(\mathbb{R}^{n},\mathcal{B}(\mathbb{C}^{q}))$,
with seminorms\[
p_{\alpha,\beta}^{m}(a)=\sup_{(x,\xi)\in\mathbb{R}^{2n}}\left\Vert [(x,\xi)]^{-\left(Re(m)-|\alpha|-|\beta|\right)}\partial_{x}^{\beta}\partial_{\xi}^{\alpha}a(x,\xi)\right\Vert _{\mathcal{B}(\mathbb{C}^{q})}.\]
We note that $\Gamma^{-\infty}(\mathbb{R}^{n},\mathcal{B}(\mathbb{C}^{q}))=\cap_{m\in\mathbb{C}}\Gamma^{m}(\mathbb{R}^{n},\mathcal{B}(\mathbb{C}^{q}))=\mathcal{S}(\mathbb{R}^{2n},\mathcal{B}(\mathbb{C}^{q}))$,
the matrices whose entries are rapidly decreasing functions. The elements of $\Gamma^{-\infty}(\mathbb{R}^{n},\mathcal{B}(\mathbb{C}^{q}))$
are called regularizing symbols. When $q=1$, we denote $\Gamma^{m}(\mathbb{R}^{n},\mathcal{B}(\mathbb{C}^{q}))$
simply by $\Gamma^{m}(\mathbb{R}^{n})$ and call its elements of scalar symbols.
\end{defn}
Associated to each symbol $a\in\Gamma^{m}(\mathbb{R}^{n},\mathcal{B}(\mathbb{C}^{q}))$,
we can define an operator $op(a):\mathcal{S}(\mathbb{R}^{n})^{\oplus q}\to\mathcal{S}(\mathbb{R}^{n})^{\oplus q}$
as\[
\left(op(a)u(x)\right)_{l}=\frac{1}{\left(2\pi\right)^{n}}\sum_{j=1}^{q}\int e^{ix\xi}a_{lj}(x,\xi)\hat{u}_{j}(\xi)d\xi,\]
where $\hat{u}(\xi)=\int e^{-ix\xi}u(x)dx$. These operators extend to $op(a):\mathcal{S}(\mathbb{R}^{n})'^{\oplus q}\to\mathcal{S}(\mathbb{R}^{n})'^{\oplus q}$,
where $\mathcal{S}(\mathbb{R}^{n})'^{\oplus q}$ is the space of temperate
distributions. We note also that the map $a\mapsto op(a)$
is injective.

The operators of type $op(a)$ are closed under composition,
that is, if $a\in\Gamma^{m}(\mathbb{R}^{n},\mathcal{B}(\mathbb{C}^{q}))$
and $b\in\Gamma^{\mu}(\mathbb{R}^{n},\mathcal{B}(\mathbb{C}^{q}))$,
then $op(a)op(b)=op(c)$, where $c\in\Gamma^{m+\mu}(\mathbb{R}^{n},\mathcal{B}(\mathbb{C}^{q}))$
and it is also denoted by $a\sharp b$. The operators are also closed
under involution, that is, if $a\in\Gamma^{m}(\mathbb{R}^{n},\mathcal{B}(\mathbb{C}^{q}))$,
there is a unique $a^{*}\in\Gamma^{m}(\mathbb{R}^{n},\mathcal{B}(\mathbb{C}^{q}))$, such that for all $u$ and $v$ in $\mathcal{S}(\mathbb{R}^{n})$
the following holds: \[
\left(op(a)u,v\right)_{L^{2}(\mathbb{R}^{n})}=\left(u,op(a^{*})v\right)_{L^{2}(\mathbb{R}^{n})},\]
where $(,)_{L^{2}(\mathbb{R}^{n})}$ denotes the usual inner product of ${L^{2}(\mathbb{R}^{n})}$. In particular, if $a^{*}=a$, we say that $a$ is a self-adjoint symbol.

The subclasses of classical symbols are defined as follows:
\begin{defn}
We denote by $\Gamma^{(\mu)}(\mathbb{R}^{n},\mathcal{B}(\mathbb{C}^{q}))$,
$\mu\in\mathbb{C}$, the set of functions in $C^{\infty}\left((\mathbb{R}^{n}\times\mathbb{R}^{n})\backslash\{0\},\mathcal{B}(\mathbb{C}^{q})\right)$
that are homogeneous of order $\mu$, that is, $a_{(\mu)}(tx,t\xi)=t^{\mu}a_{(\mu)}(x,\xi)$
for all $t>0$. By $\Gamma_{cl}^{m}(\mathbb{R}^{n},\mathcal{B}(\mathbb{C}^{q}))$,
$m\in\mathbb{C}$, we denote the subset of $\Gamma^{m}(\mathbb{R}^{n},\mathcal{B}(\mathbb{C}^{q}))$
of symbols $a$, for which there exists a sequence of functions $a_{(m-j)}\in\Gamma^{(m-j)}(\mathbb{R}^{n},\mathcal{B}(\mathbb{C}^{q}))$,
$j\in\mathbb{N}_{0}$, such that, for any zero excision function $\chi\in C^{\infty}(\mathbb{R}^{n}\times\mathbb{R}^{n})$
and for all $N\in\mathbb{N}$, we have \[
a-\sum_{j=0}^{N-1}\chi a_{(m-j)}\in\Gamma^{m-N}(\mathbb{R}^{n},\mathcal{B}(\mathbb{C}^{q})).\]
In this case we say that $a$ is a classical symbol and the sequence $a_{(m-j)}$ is the asymptotic
expansion of $a$. We use the notation $a\sim\sum_{j=0}^{\infty}a_{(m-j)}$
to indicate that. The function $a_{(m)}\in\Gamma^{(m)}(\mathbb{R}^{n},\mathcal{B}(\mathbb{C}^{q}))$
is called principal symbol of $a$ if $a_{(m)}$ is not identically
equal to zero. The operators $op(a)$ with classical symbols - also
called classical operators - are closed under composition and involution.
\end{defn}
For classical symbols we define the following sequence \begin{equation}
0\to\Gamma_{cl}^{m-1}(\mathbb{R}^{n},\mathcal{B}(\mathbb{C}^{q}))\overset{i}{\to}\Gamma_{cl}^{m}(\mathbb{R}^{n},\mathcal{B}(\mathbb{C}^{q}))\overset{s}{\to}C^{\infty}(S^{2n-1},\mathcal{B}(\mathbb{C}^{q}))\to0,\label{eq:sequencia exata simbolo}\end{equation}
where $i$ is the inclusion and $s$ is given by $s(a)=a_{(m)}|_{S^{2n-1}}$.
This sequence is exact. In fact if $s(a)=0$, then $a_{(m)}=0$
and, by the asymptotic expansion, $a$ must belong to $\Gamma^{m-1}(\mathbb{R}^{n},\mathcal{B}(\mathbb{C}^{q}))$.
If $g\in C^{\infty}(S^{2n-1},\mathcal{B}(\mathbb{C}^{q}))$, we can choose a zero excision function
$\chi\in C^{\infty}(\mathbb{R}^{n}\times\mathbb{R}^{n})$ and define
$a(x,\xi)=\chi(x,\xi)g\left(\frac{(x,\xi)}{\left|(x,\xi)\right|}\right)\left|(x,\xi)\right|^{m}\in\Gamma_{cl}^{m}(\mathbb{R}^{n},\mathcal{B}(\mathbb{C}^{q}))$.
It is clear that $s(a)=g$, and $s$ is surjective.

Next we define the notion of elliptic symbols for this class. It will
turn out later that ellipticity is equivalent to the Fredholm property.
\begin{defn}
We say that $a\in\Gamma^{m}(\mathbb{R}^{n},\mathcal{B}(\mathbb{C}^{q}))$
is an elliptic symbol if there exists $C>0$ and $R>0$ such that
for each $|(x,\xi)|\ge R$, $a(x,\xi)$ is invertible and\[
\left\Vert a(x,\xi)^{-1}\right\Vert _{\mathcal{B}(\mathbb{C}^{q})}\le C[(x,\xi)]^{-Re(m)}.\]

\end{defn}
For classical symbols, ellipticity can be completely characterized
by the principal symbol: $a\in\Gamma_{cl}^{m}(\mathbb{R}^{n},\mathcal{B}(\mathbb{C}^{q}))$
is elliptic if, and only if, $a_{(m)}(x,\xi)$ is invertible for all
$(x,\xi)\in\mathbb{R}^{2n}\backslash\{0\}$, where $a_{(m)}$ is the
principal symbol of $a$. Moreover if $a\in\Gamma^{m}(\mathbb{R}^{n},\mathcal{B}(\mathbb{C}^{q}))$
is elliptic, then there exists a symbol $b\in\Gamma^{-m}(\mathbb{R}^{n},\mathcal{B}(\mathbb{C}^{q}))$,
called a parametrix of $a$, that satisfies $a\sharp b=1+r_{1}$ and
$b\sharp a=1+r_{2}$, where $r_{1}$ and $r_{2}$ belong to $\Gamma^{-\infty}(\mathbb{R}^{n},\mathcal{B}(\mathbb{C}^{q}))$. If $a$ is classical, so is its parametrix.

We can also define Sobolev type spaces, on which the operators act
continuously.
\begin{defn}
Let $b\in\Gamma^{s}(\mathbb{R}^{n})$ be the function $b(x,\xi)=\left(1+|x|^{2}+|\xi|^{2}\right)^{\frac{s}{2}}$,
$s\in\mathbb{R}$. Let $c\in\Gamma^{-s}(\mathbb{R}^{n})$ be a parametrix
of $b$. We define the Shubin Sobolev space $Q^{s}(\mathbb{R}^{n})$
as the space \[
Q^{s}(\mathbb{R}^{n})=\{u\in\mathcal{S}'(\mathbb{R}^{n});\, op(b)u\in L^{2}(\mathbb{R}^{n})\}.\]

Let $r\in\Gamma^{-\infty}(\mathbb{R}^{n})$ be such that $op(r)+I=op(c)op(b)$.
Then we can endow $Q^{s}(\mathbb{R}^{n})$ with a Hilbert space structure
with inner product\[
(u,v)_{s}=\left(op(b)u,op(b)v\right)_{L^{2}(\mathbb{R}^{n})}+\left(op(r)u,op(r)v\right)_{L^{2}(\mathbb{R}^{n})}.\]

\end{defn}
It can be proved that the spaces $Q^{s}(\mathbb{R}^{n})$ can be defined
using any other elliptic symbol and its parametrix instead of $b$.
That they provide the same topology is proved in Proposition 1.5.3
of \cite{NicolaRodino} for even more general operators.

These spaces are such that \[
\cap_{s\in\mathbb{R}}Q^{s}(\mathbb{R}^{n})=\mathcal{S}(\mathbb{R}^{n})\,\,\,\mbox{and}\,\,\,\cup_{s\in\mathbb{R}}Q^{s}(\mathbb{R}^{n})=\mathcal{S}'(\mathbb{R}^{n}).\]

Moreover if $a\in\Gamma^{m}(\mathbb{R}^{n},\mathcal{B}(\mathbb{C}^{q}))$,
then $op(a):Q^{s}(\mathbb{R}^{n})^{\oplus q}\to Q^{s-m}(\mathbb{R}^{n})^{\oplus q}$
acts continuously for all $s\in\mathbb{R}$. If $m=0$, this implies
that $op(a)\in\mathcal{B}(L^{2}(\mathbb{R}^{n})^{\oplus q})$. 

We note also that if $a\in\Gamma^{m}(\mathbb{R}^{n},\mathcal{B}(\mathbb{C}^{q}))$, $m>0$, is a self-adjoint elliptic symbol, then $op(a):Q^{m}(\mathbb{R}^{n})^{\oplus q}\subset L^{2}(\mathbb{R}^{n})^{\oplus q}\to L^{2}(\mathbb{R}^{n})^{\oplus q}$ is a unbounded self-adjoint operator. 

\section{Norm modulo compact operators.}

In this section, we express the norm of the operators, modulo compacts,
in terms of their principal symbols. The notation $\mathcal{K}(\mathcal{H})$
will indicate the set of compact operators on a Hilbert space $\mathcal{H}$.

We follow an idea of H\"ormander and define a family of unitary operators
on $\mathcal{B}(L^{2}(\mathbb{R}^{n}))^{\oplus q}$, which is our
adaptation for Shubin class of the family defined in Theorem 3.3 of
\cite{Hormander}.
\begin{defn}
\label{def:operadores para norma mod compacto} For each $\lambda>0$,
$(x_{0},\xi_{0})\in\mathbb{R}^{2n}\backslash\{0\}$, we define the
operator $T_{\lambda}(x_{0},\xi_{0}):L^{2}(\mathbb{R}^{n})^{\oplus q}\to L^{2}(\mathbb{R}^{n})^{\oplus q}$
by\[
T_{\lambda}(x_{0},\xi_{0})u(x)=e^{i\lambda^{\frac{1}{2}}(x-x_{0})\xi_{0}}u(x-\lambda^{\frac{1}{2}}x_{0}).\]

\end{defn}
These operators have the following properties:
\begin{prop}
\label{pro:prop de Rlambda, Slambda, Tlambda}Let $a\in\Gamma_{cl}^{0}(\mathbb{R}^{n},\mathcal{B}(\mathbb{C}^{q}))$
. Let us denote by $a_{(0)}$ the principal symbol of $a$. Let us
fix $(x_{0},\xi_{0})\in\mathbb{R}^{n}\times\mathbb{R}^{n}\backslash\{0\}$ and denote
by\textup{ $T_{\lambda}$} the operator\textup{ $T_{\lambda}(x_{0},\xi_{0})$}.
Then the following holds:

\textup{(i) $T_{\lambda}$ is an unitary operator acting on $L^{2}(\mathbb{R}^{n})^{\oplus q}$.}

\textup{(ii) For every $u\in L^{2}(\mathbb{R}^{n})^{\oplus q}$, we
have that $\lim_{\lambda\to\infty}T_{\lambda}u=0$ weakly.}

\textup{(iii) For any $u\in L^{2}(\mathbb{R}^{n})^{\oplus q}$ we
have \[
\lim_{\lambda\to\infty}\left\Vert T_{\lambda}^{-1}op(a)T_{\lambda}u-a_{(0)}(x_{0},\xi_{0})u\right\Vert _{L^{2}(\mathbb{R}^{n})^{\oplus q}}=0.\]
}\end{prop}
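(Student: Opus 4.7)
For part (i), I would factor $T_\lambda = M_\lambda \tau_\lambda$, where $\tau_\lambda u(x) = u(x - \lambda^{1/2} x_0)$ is translation and $M_\lambda v(x) = e^{i\lambda^{1/2}(x - x_0)\xi_0} v(x)$ is multiplication by a unimodular smooth function. Both factors are manifestly unitary on $L^2(\mathbb{R}^n)^{\oplus q}$, so $T_\lambda$ is unitary, and its inverse is $T_\lambda^{-1} = \tau_\lambda^{-1}M_\lambda^{-1}$.

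For part (ii), since $\|T_\lambda u\| = \|u\|$ by (i), the orbit $\{T_\lambda u\}$ is bounded, so it suffices to verify $\langle T_\lambda u, \phi \rangle \to 0$ for $u,\phi$ in a dense subset of $L^2(\mathbb{R}^n)^{\oplus q}$, say $C_c^\infty(\mathbb{R}^n)^{\oplus q}$. Changing variables $y = x - \lambda^{1/2}x_0$, the inner product becomes a constant unimodular phase times $\int e^{i\lambda^{1/2}y\xi_0}u(y)\overline{\phi(y + \lambda^{1/2}x_0)}\,dy$. If $x_0 \neq 0$, then $\operatorname{supp}(u)$ and $\operatorname{supp}(\phi) - \lambda^{1/2} x_0$ are eventually disjoint, so the integral vanishes for large $\lambda$. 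If $x_0 = 0$ then $\xi_0 \neq 0$, and the integral equals the Fourier transform of $u\overline{\phi} \in L^1$ at $-\lambda^{1/2}\xi_0$, which tends to zero by the Riemann--Lebesgue lemma.

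The heart of (iii) is the explicit identity
\[
T_\lambda^{-1} op(a) T_\lambda = op(a_\lambda), \qquad a_\lambda(y, \eta) := a\bigl(y + \lambda^{1/2} x_0,\ \eta + \lambda^{1/2}\xi_0\bigr),
\]
which I would verify by computing $\widehat{T_\lambda u}(\xi) = e^{i\psi_\lambda(\xi)}\hat{u}(\xi - \lambda^{1/2}\xi_0)$ (the modulation shifts the frequency while the translation introduces a factor $e^{-i\lambda^{1/2}x_0\xi}$), substituting into the definition of $op(a) T_\lambda u$, changing variables $\eta = \xi - \lambda^{1/2}\xi_0$, and then applying $T_\lambda^{-1}$; the global phases introduced by $T_\lambda$ and $T_\lambda^{-1}$ cancel exactly, leaving the symbol translated in phase space. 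With the classical decomposition $a = \chi a_{(0)} + r$, $r \in \Gamma^{-1}$, the degree--zero homogeneity of $a_{(0)}$ gives
\[
a_{(0)}(y + \lambda^{1/2}x_0, \eta + \lambda^{1/2}\xi_0) = a_{(0)}(\lambda^{-1/2}y + x_0,\ \lambda^{-1/2}\eta + \xi_0) \longrightarrow a_{(0)}(x_0, \xi_0)
\]
as $\lambda \to \infty$ (by continuity at $(x_0,\xi_0) \neq 0$), while the Shubin estimate yields $r(y + \lambda^{1/2}x_0, \eta + \lambda^{1/2}\xi_0) \to 0$ because the bracket of the translated argument grows like $\lambda^{1/2}$. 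Hence $a_\lambda(y,\eta) \to a_{(0)}(x_0,\xi_0)$ pointwise.

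To upgrade to $L^2$-convergence of $op(a_\lambda)u \to a_{(0)}(x_0,\xi_0)u$, I would first take $u \in \mathcal{S}(\mathbb{R}^n)^{\oplus q}$ and observe that, for $|\alpha|+|\beta| \geq 1$, the Shubin estimate applied to $a_\lambda$ forces $\partial_\eta^\alpha \partial_y^\beta a_\lambda(y,\eta) \to 0$ uniformly on compact sets, while $a_\lambda$ itself is uniformly bounded; in particular the $\Gamma^0$-seminorms $p_{\alpha,\beta}^0(a_\lambda)$ are bounded in $\lambda$. Integrating by parts in $\eta$ through $e^{iy\eta} = (1+|y|^2)^{-N}(1 - \Delta_\eta)^N e^{iy\eta}$ then produces a pointwise bound $|op(a_\lambda)u(y)| \leq C_N(1 + |y|^2)^{-N}$ uniform in $\lambda$, for every $N$; combined with pointwise convergence (obtained from the pointwise convergence of the symbol and dominated convergence in $\eta$), dominated convergence in $y$ gives $L^2$-convergence for Schwartz $u$. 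A standard $3\varepsilon$ argument using the uniform bound $\|T_\lambda^{-1}op(a)T_\lambda\|_{\mathcal{B}(L^2)} = \|op(a)\|_{\mathcal{B}(L^2)}$ from (i) then extends this to arbitrary $u \in L^2$. The main delicate step is to justify the uniform pointwise decay of $op(a_\lambda)u$: the scale-invariant nature of the Shubin bracket $[(x,\xi)]$ is precisely what guarantees that differentiating the translated symbol does not introduce $\lambda$-dependent growth.
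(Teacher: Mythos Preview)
Your approach is essentially the paper's: both rest on the conjugation identity $T_\lambda^{-1}\,op(a)\,T_\lambda = op\bigl(a(\,\cdot+\lambda^{1/2}x_0,\ \cdot+\lambda^{1/2}\xi_0)\bigr)$ followed by dominated convergence; the paper only sketches this and refers to H\"ormander and Grigis--Sj\"ostrand, whereas you spell out the details for (i)--(iii). One small slip: the $\Gamma^0$-seminorms $p^0_{\alpha,\beta}(a_\lambda)$ are \emph{not} uniformly bounded in $\lambda$ (evaluate at $(y,\eta)=-\lambda^{1/2}(x_0,\xi_0)$, where the translated bracket is small but $[(y,\eta)]$ is large), but fortunately your integration-by-parts bound only needs the uniform sup-norm estimates $\sup_{y,\eta}\lvert\partial_\eta^\alpha a_\lambda(y,\eta)\rvert\le C_\alpha$, which do hold because $a\in\Gamma^0$ implies $\lvert\partial_\xi^\alpha a\rvert\le C_\alpha[(x,\xi)]^{-|\alpha|}\le C_\alpha'$.
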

\begin{proof}
We only sketch the proof since it follows closely the arguments of
Theorem 3.3 of H\"ormander \cite{Hormander} and the arguments that
follow lemma 4.6 of Grigis and Sj\"ostrand \cite{Sjoestrand}.
We just note that\[
\widehat{T_{\lambda}u}(\xi)=e^{i\lambda x_{0}\xi_{0}-i\lambda^{\frac{1}{2}}x_{0}\xi_{0}-i\lambda^{\frac{1}{2}}x_{0}\xi}\hat{u}(\xi-\lambda^{\frac{1}{2}}\xi_{0})\]
\[
T_{\lambda}^{-1}u(x)=e^{-i\lambda^{\frac{1}{2}}(x+\lambda^{\frac{1}{2}}x_{0}-x_{0})\xi_{0}}u(x+\lambda^{\frac{1}{2}}x_{0}).\]

Using these two formulas we obtain the following\[
T_{\lambda}^{-1}op(a)T_{\lambda}u=op(a(x+\lambda^{\frac{1}{2}}x_{0},\xi+\lambda^{\frac{1}{2}}\xi_{0})).\]

The proof follows now from the Lebesgue dominated convergence Theorem.\end{proof}
\begin{cor}
Let $A=op(a)$, $a\in\Gamma_{cl}^{0}(\mathbb{R}^{n})$. Then\[
\sup_{|(x,\xi)|=1}\left|a_{(0)}(x,\xi)\right|\le\inf_{C\in\mathcal{K}(L^{2}(\mathbb{R}^{n}))}\left\Vert A+C\right\Vert _{\mathcal{B}(L^{2}(\mathbb{R}^{n}))}.\]
\end{cor}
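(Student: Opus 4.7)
The plan is to exploit the three properties of $T_\lambda = T_\lambda(x_0,\xi_0)$ established in Proposition \ref{pro:prop de Rlambda, Slambda, Tlambda}, testing $A+C$ against vectors of the form $T_\lambda u$ for an arbitrary unit vector $u \in L^2(\mathbb{R}^n)$ and $(x_0,\xi_0)$ on the unit sphere of $\mathbb{R}^{2n}$.

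First I would fix $(x_0,\xi_0)$ with $|(x_0,\xi_0)| = 1$, a unit vector $u \in L^2(\mathbb{R}^n)$, and a compact operator $C \in \mathcal{K}(L^2(\mathbb{R}^n))$. Since $T_\lambda$ is unitary by (i), we have the trivial estimate
\[
\|A + C\|_{\mathcal{B}(L^2)} \;\geq\; \|(A+C)T_\lambda u\|_{L^2} \;=\; \|T_\lambda^{-1}(A+C)T_\lambda u\|_{L^2}.
\]
I would then analyze the right-hand side by splitting it into the contribution from $A$ and from $C$. By (iii), $T_\lambda^{-1} A T_\lambda u \to a_{(0)}(x_0,\xi_0)\,u$ strongly in $L^2$, so the norm converges to $|a_{(0)}(x_0,\xi_0)|$ (here $q=1$, so $a_{(0)}(x_0,\xi_0)$ is a scalar, and $\|u\|=1$).

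For the compact part, property (ii) gives $T_\lambda u \rightharpoonup 0$ weakly as $\lambda \to \infty$. Since compact operators send weakly convergent sequences to strongly convergent ones, $C T_\lambda u \to 0$ in $L^2$, and since $T_\lambda^{-1}$ is an isometry, $\|T_\lambda^{-1} C T_\lambda u\|_{L^2} \to 0$. Combining both terms by the triangle inequality and passing to the limit $\lambda \to \infty$ yields
\[
\|A+C\|_{\mathcal{B}(L^2)} \;\geq\; |a_{(0)}(x_0,\xi_0)|.
\]
The conclusion follows by taking the supremum over $(x_0,\xi_0)$ on the unit sphere and then the infimum over compact $C$.

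I do not expect any serious obstacle: the nontrivial analytic work is already packaged inside Proposition \ref{pro:prop de Rlambda, Slambda, Tlambda}, and the argument is essentially a symbol-extraction trick by conjugation with a unitary family together with the standard fact that compacts vanish on weakly null sequences. The only small point that requires a brief justification is the strong-vs-weak convergence step, but this is a classical characterization of compactness on Hilbert spaces.
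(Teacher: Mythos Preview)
Your proposal is correct and follows essentially the same approach as the paper: conjugate $A+C$ by the unitary family $T_\lambda$, use property (iii) to extract the principal symbol from the $A$-part, and use (i)--(ii) together with compactness to kill the $C$-part. If anything, your write-up is more explicit than the paper's, which compresses the vanishing of $T_\lambda^{-1}CT_\lambda u$ into a single line without spelling out the weak-null/compactness step.
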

\begin{proof}
Let us prove that $\left|a_{(0)}(x_{0},\xi_{0})\right|\le\inf_{C\in\mathcal{K}(L^{2}(\mathbb{R}^{n}))}\left\Vert A+C\right\Vert _{\mathcal{B}(L^{2}(\mathbb{R}^{n}))}$
for any $(x_{0},\xi_{0})\ne 0$.

For any $\epsilon>0$, $C\in\mathcal{K}(L^2(\mathbb{R}^n))$ and $u\in L^{2}(\mathbb{R}^{n})$,
there is a $\lambda>0$ such that \[
\left\Vert T_{\lambda}^{-1}(A+C)T_{\lambda}u-a_{(0)}(x_{0},\xi_{0})u\right\Vert _{L^{2}(\mathbb{R}^{n})}<\epsilon.\]
Therefore $\left|a_{(0)}(x_{0},\xi_{0})\right|\le\left\Vert A+C\right\Vert _{\mathcal{B}(L^{2}(\mathbb{R}^{n}))}+\epsilon$
for any $\epsilon>0$ and then the result follows.
\end{proof}
We will prove next the opposite inequality. A similar argument was used by Nicola for SG symbols \cite{Nicolakteoria}. In order to do that we use simple facts about the Anti-Wick quantization, see \cite[Section 1.7]{NicolaRodino} and \cite[Chapter 4]{Shubin}. These are stated in the form of the next proposition.
\begin{prop}
There is a linear map $a\in\Gamma_{cl}^{0}(\mathbb{R}^{n})\mapsto A_{a}\in\mathcal{B}(L^{2}(\mathbb{R}^{n}))$
such that the following holds:

(i) $A_{1}=I$, where $I$ is the identity operator on $\mathcal{B}(L^{2}(\mathbb{R}^{n}))$.

(ii) For each $a\in\Gamma_{cl}^{0}(\mathbb{R}^{n})$, there exists
$b\in\Gamma_{cl}^{0}(\mathbb{R}^{n})$ such that $A_{a}=op(b)$ and
$a-b\in\Gamma_{cl}^{-1}(\mathbb{R}^{n})$.

(iii) If $a\in\Gamma_{cl}^{0}(\mathbb{R}^{n})$ is a real valued function,
then $A_{a}$ is a self-adjoint operator. Furthermore, if $a\ge0$, then $A_{a}\ge0$.
\end{prop}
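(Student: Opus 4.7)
The plan is to construct $A_a$ via the Anti-Wick (Berezin) quantization built from Gaussian coherent states, following Section 1.7 of \cite{NicolaRodino} and Chapter 4 of \cite{Shubin}. Fix the normalized Gaussian $\varphi_0(x)=\pi^{-n/4}e^{-|x|^2/2}$ and, for $(y,\eta)\in\mathbb{R}^{2n}$, define the coherent states $\varphi_{y,\eta}(x)=e^{i\eta\cdot x}\varphi_0(x-y)$. Let $P_{y,\eta}$ be the rank-one orthogonal projection onto the line spanned by $\varphi_{y,\eta}$ in $L^2(\mathbb{R}^n)$. For $a\in\Gamma_{cl}^{0}(\mathbb{R}^{n})$ set
$$A_a \;=\; \frac{1}{(2\pi)^n}\int_{\mathbb{R}^{2n}} a(y,\eta)\, P_{y,\eta}\, dy\, d\eta,$$
interpreted as a weak integral via the sesquilinear form $(A_au,v)=(2\pi)^{-n}\int a(y,\eta)\,(u,\varphi_{y,\eta})\,\overline{(v,\varphi_{y,\eta})}\, dy\, d\eta$. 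Since $a$ is bounded and the coherent-state transform $u\mapsto (u,\varphi_{y,\eta})$ is an isometry from $L^2(\mathbb{R}^n)$ into $L^2(\mathbb{R}^{2n},(2\pi)^{-n}dy\,d\eta)$, this defines $A_a\in\mathcal{B}(L^2(\mathbb{R}^n))$ with $\|A_a\|\le \|a\|_\infty$, and the map $a\mapsto A_a$ is linear.

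Property (i) is the standard coherent-state resolution of the identity $(2\pi)^{-n}\int P_{y,\eta}\,dy\,d\eta=I$, obtained from $\|\varphi_0\|_{L^2}=1$ by a direct Fourier computation. Property (iii) is then immediate from the integral-of-projections formula: for real $a$, $A_a$ is a real weak combination of the self-adjoint projections $P_{y,\eta}$ and so is self-adjoint; and for $a\ge 0$,
$$(A_au,u)=\frac{1}{(2\pi)^n}\int a(y,\eta)\,|(u,\varphi_{y,\eta})|^2\, dy\, d\eta \;\ge\; 0.$$

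For property (ii), the plan is to compute the Schwartz kernel of $A_a$ directly from the definition and read off its Kohn-Nirenberg symbol $b(x,\xi)$. A direct Gaussian integration identifies $b$ as the convolution $b=a*G$ of $a$ with a fixed Gaussian $G$ of unit mass on $\mathbb{R}^{2n}$, which is the classical relation between Anti-Wick and Kohn-Nirenberg symbols. Taylor-expanding $a(x-z,\xi-\zeta)$ around $(x,\xi)$ and using that $G$ is Schwartz with unit total mass and vanishing first moments, one obtains the asymptotic expansion
$$b(x,\xi)\;\sim\;\sum_{\alpha,\beta\in\mathbb{N}_0^n} c_{\alpha,\beta}\,\partial_x^\beta\partial_\xi^\alpha a(x,\xi),\qquad c_{\alpha,\beta}=\frac{(-1)^{|\alpha|+|\beta|}}{\alpha!\,\beta!}\int z^\beta \zeta^\alpha G(z,\zeta)\,dz\,d\zeta,$$
with $c_{0,0}=1$ and $c_{\alpha,\beta}=0$ whenever $|\alpha|+|\beta|=1$. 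Hence $b-a\in\Gamma^{-2}(\mathbb{R}^n)\subset\Gamma^{-1}(\mathbb{R}^n)$, and $b$ is classical because convolution with a Schwartz function preserves the homogeneous asymptotic expansion of a classical symbol (verified term by term on each $\chi\, a_{(-j)}$ using the rapid decay of $G$).

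The main obstacle is step (ii): one must justify that the Anti-Wick operator, initially defined only as a bounded operator on $L^2(\mathbb{R}^n)$, actually lies in the Shubin calculus with a classical symbol differing from $a$ by an element of $\Gamma_{cl}^{-1}(\mathbb{R}^n)$. The required estimates concern the remainder in the Taylor expansion of $a*G$ and the preservation of classicality under Gaussian convolution; both are standard but technical. Properties (i) and (iii) then follow formally from writing $A_a$ as a weak integral of rank-one projections.
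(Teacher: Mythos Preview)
The paper does not actually prove this proposition: it merely states it as a collection of known facts about the Anti-Wick quantization, citing \cite[Section 1.7]{NicolaRodino} and \cite[Chapter 4]{Shubin}. Your proposal supplies precisely the standard argument from those references, so in that sense it is the ``same approach'' by default, and it is correct.

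One small inaccuracy worth flagging: the identity $b=a*G$ with $G$ a real unit-mass Gaussian on $\mathbb{R}^{2n}$ is the relation between the Anti-Wick symbol and the \emph{Weyl} symbol. For the Kohn--Nirenberg symbol used in the paper, the direct kernel computation you outline yields instead $b=a*H$ with
\[
H(u,v)\;=\;c_n\, e^{-iv\cdot u}\,e^{-|u|^2/2-|v|^2/2},
\]
a complex Schwartz function. This does not affect your argument: $H$ still has total integral $1$ (giving $c_{0,0}=1$), and a short Gaussian computation shows its first moments $\int u_j H$ and $\int v_j H$ vanish, so $b-a\in\Gamma^{-2}\subset\Gamma^{-1}_{cl}$ as claimed. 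The remaining points---resolution of the identity for (i), and positivity from the integral-of-projections representation for (iii)---are exactly as in Shubin, and your justification of classicality of $b$ via term-by-term convolution of the asymptotic expansion with a Schwartz function is the standard one.
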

Using this we can prove.
\begin{prop}
Let $A=op(a)$, $a\in\Gamma_{cl}^{0}(\mathbb{R}^{n})$ and $a_{(0)}$
be its principal symbol. Then \[
\inf_{C\in\mathcal{K}(L^{2}(\mathbb{R}^{n}))}\left\Vert A+C\right\Vert _{\mathcal{B}(L^{2}(\mathbb{R}^{n}))}\le2\sup_{|(x,\xi)|=1}\left|a_{(0)}(x,\xi)\right|.\]
\end{prop}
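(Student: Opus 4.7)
The plan is to use the Anti-Wick quantization to replace $A$, modulo compacts, by an operator whose norm can be controlled through the positivity property (iii) of the preceding proposition, with a factor of two arising from the decomposition of the (possibly complex) principal symbol into real and imaginary parts. Set $M:=\sup_{|(x,\xi)|=1}|a_{(0)}(x,\xi)|$ and fix a zero excision function $\chi\in C^{\infty}(\mathbb{R}^{n}\times\mathbb{R}^{n})$.

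First I would show that $op(a)-A_{\chi a_{(0)}}$ is compact on $L^{2}(\mathbb{R}^{n})$. Since $a$ is classical, $a-\chi a_{(0)}\in\Gamma_{cl}^{-1}(\mathbb{R}^{n})$; by property (ii) above, $A_{\chi a_{(0)}}=op(b)$ with $b-\chi a_{(0)}\in\Gamma_{cl}^{-1}(\mathbb{R}^{n})$. Thus $op(a)-A_{\chi a_{(0)}}=op(r)$ for some $r\in\Gamma_{cl}^{-1}(\mathbb{R}^{n})$. Operators of order $-1$ map $L^{2}(\mathbb{R}^{n})=Q^{0}(\mathbb{R}^{n})$ continuously into $Q^{1}(\mathbb{R}^{n})$, and the Shubin Sobolev embedding $Q^{1}(\mathbb{R}^{n})\hookrightarrow L^{2}(\mathbb{R}^{n})$ is compact, so $op(r)\in\mathcal{K}(L^{2}(\mathbb{R}^{n}))$. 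This already yields
\[
\inf_{C\in\mathcal{K}(L^{2}(\mathbb{R}^{n}))}\|A+C\|_{\mathcal{B}(L^{2}(\mathbb{R}^{n}))}\le\|A_{\chi a_{(0)}}\|_{\mathcal{B}(L^{2}(\mathbb{R}^{n}))}.
\]

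Next I would bound $\|A_{\chi a_{(0)}}\|\le 2M$. Writing $\chi a_{(0)}=u+iv$ with $u$ and $v$ real, we have $|u|,|v|\le|\chi a_{(0)}|\le M$ pointwise, since $\chi\in[0,1]$ and $a_{(0)}$ is homogeneous of degree zero. Because constants belong to $\Gamma_{cl}^{0}(\mathbb{R}^{n})$, the four functions $M\pm u$ and $M\pm v$ are non-negative classical symbols of order zero. Using (i), (iii) and the linearity of $a\mapsto A_{a}$ we get $MI\pm A_{u}\ge 0$ and $MI\pm A_{v}\ge 0$; as $A_{u}$ and $A_{v}$ are self-adjoint, this forces $\|A_{u}\|,\|A_{v}\|\le M$, whence
\[
\|A_{\chi a_{(0)}}\|\le\|A_{u}\|+\|A_{v}\|\le 2M.
\]

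The real content of the argument is the positivity step of the third paragraph; everything else reduces to the now standard fact that classical Shubin operators of negative order are compact on $L^{2}(\mathbb{R}^{n})$. The factor of $2$ appearing in the statement is precisely the price paid by splitting a possibly complex symbol into its real and imaginary parts: if $a_{(0)}$ happened to be real-valued, the same argument would yield the sharper bound $\sup_{|(x,\xi)|=1}|a_{(0)}(x,\xi)|$.
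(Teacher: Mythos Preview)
Your proof is correct and follows essentially the same approach as the paper's: both arguments pass to $\chi a_{(0)}$ modulo compacts, use the positivity property of the Anti-Wick quantization to bound the norm of the self-adjoint pieces by $M$, and obtain the factor $2$ from the real/imaginary decomposition. The only cosmetic difference is that the paper first reduces the statement to the real-valued case (proving the sharper bound $M$ there) and then implicitly splits a complex symbol as $a=\mathrm{Re}(a)+i\,\mathrm{Im}(a)$, whereas you split $\chi a_{(0)}=u+iv$ directly inside the Anti-Wick quantization; the underlying idea is identical.
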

\begin{proof}
It is enough to prove that for any real valued function $a\in\Gamma_{cl}^{0}(\mathbb{R}^{n})$,
we have\[
\inf_{C\in\mathcal{K}(L^{2}(\mathbb{R}^{n}))}\left\Vert A+C\right\Vert _{\mathcal{B}(L^{2}(\mathbb{R}^{n}))}\le\sup_{|(x,\xi)|=1}\left|a_{(0)}(x,\xi)\right|.\]
If $a$ is a real valued function, so is its principal symbol $a_{(0)}$. Let $M:=\sup_{|(x,\xi)|=1}\left|a_{(0)}(x,\xi)\right|$ and $\chi\in C^{\infty}(\mathbb{R}^{n}\times\mathbb{R}^{n})$ be
a zero excision function. Let us define $c\in\Gamma_{cl}^{0}(\mathbb{R}^{n})$
by\[
c(x,\xi)=\chi(x,\xi)a_{(0)}(x,\xi).\]

The above expression shows that the principal symbol of $c$, $c_{(0)}$,
is equal to $a_{(0)}$. Therefore $a-c\in\Gamma_{cl}^{-1}(\mathbb{R}^{n})$.
It is also clear that $M-c\ge0$ and $M+c\ge0$. Hence using the Anti-Wick
quantization, we conclude that $MI-A_{c}$ and $MI+A_{c}$ are
two positive operators on $\mathcal{B}(L^{2}(\mathbb{R}^{n}))$. As $A_{c}$ is self-adjoint, $\left\Vert A_{c}\right\Vert _{\mathcal{B}(L^{2}(\mathbb{R}^{n})}\le M$.
We know that $A_{c}=op(b)$, for some $b\in\Gamma_{cl}^{0}(\mathbb{R}^{n})$
such that $c-b\in\Gamma_{cl}^{-1}(\mathbb{R}^{n})$. Hence $A_{c}=op(a)+op(k)$,
where $k=b-c+c-a\in\Gamma_{cl}^{-1}(\mathbb{R}^{n})$. Therefore $op(k)\in\mathcal{K}(L^{2}(\mathbb{R}^{n}))$
and $\left\Vert op(a)+op(k)\right\Vert _{\mathcal{B}(L^{2}(\mathbb{R}^{n}))}\le M$.
\end{proof}
Let $\overline{\Gamma_{cl}^{0}(\mathbb{R}^{n})}$ be the closure of
the set of operators $op(a)$ in $\mathcal{B}(L^{2}(\mathbb{R}^{n}))$,
with $a\in\Gamma_{cl}^{0}(\mathbb{R}^{n})$. As a consequence of the
above proposition one can define the operator $\overline{s}:\overline{\Gamma_{cl}^{0}(\mathbb{R}^{n})}\to C(S^{2n-1})$
as the unique operator such that \[
\overline{s}(op(a))=s(a),\]
where $s$ is the function that was defined in \ref{eq:sequencia exata simbolo}.

In fact as $\sup_{|(x,\xi)|=1}\left|a_{(0)}(x,\xi)\right|\le\left\Vert A\right\Vert _{\mathcal{B}(L^{2}(\mathbb{R}^{n}))}$,
the above function extends continuously to $\overline{\Gamma_{cl}^{0}(\mathbb{R}^{n})}$.
\begin{cor}
\label{thm:norma modulo compactos} The following sequence\begin{equation}
0\to\mathcal{K}(L^{2}(\mathbb{R}^{n}))\overset{\overline{i}}{\to}\overline{\Gamma_{cl}^{0}(\mathbb{R}^{n})}\overset{\overline{s}}{\to}C(S^{2n-1})\to0,\label{eq:sequencia exata}\end{equation}
where $\overline{i}$ is the inclusion and $\overline{s}$ is
the map defined above, is an exact sequence of $C^{*}$-algebras.
In particular $\overline{\Gamma_{cl}^{0}(\mathbb{R}^{n})}/\mathcal{K}$
is isomorphic to $C(S^{2n-1})$, which implies that for $A=op(a)$,
$a\in\Gamma_{cl}^{0}(\mathbb{R}^{n})$, the following equality holds\[
\inf_{C\in\mathcal{K}(L^{2}(\mathbb{R}^{n}))}\left\Vert A+C\right\Vert _{\mathfrak{\mathcal{B}}(L^{2}(\mathbb{R}^{n}))}=\sup_{|(x,\xi)|=1}\left|a_{(0)}(x,\xi)\right|.\]
\end{cor}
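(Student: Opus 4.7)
The plan is to assemble the exact sequence from the two opposite inequalities already established in the preceding corollary and proposition. Those give $\|s(a)\|_\infty \le \inf_{C \in \mathcal{K}} \|op(a) + C\| \le 2\|s(a)\|_\infty$, which already pins down the quotient $\overline{\Gamma_{cl}^0(\mathbb{R}^n)}/\mathcal{K}$ as abstractly isomorphic to the closure of $s(\Gamma_{cl}^0)$ in $C(S^{2n-1})$ up to a constant; the $C^*$-algebraic framework will then automatically promote this to an exact norm equality.

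First I would verify that $\overline{\Gamma_{cl}^0(\mathbb{R}^n)}$ is a $C^*$-subalgebra of $\mathcal{B}(L^2(\mathbb{R}^n))$. The set $\{op(a): a \in \Gamma_{cl}^0\}$ is a $*$-subalgebra because composition and involution preserve classical symbols of order $0$ (as stated in the previous section), and closure in norm preserves the $*$-algebra property, yielding a $C^*$-algebra. Next I would check that $\mathcal{K}(L^2(\mathbb{R}^n)) \subseteq \overline{\Gamma_{cl}^0(\mathbb{R}^n)}$: since $\Gamma^{-\infty}(\mathbb{R}^n) = \mathcal{S}(\mathbb{R}^{2n})$ is contained in $\Gamma_{cl}^0$, the operators $op(r)$ with $r$ regularizing are integral operators with Schwartz kernels, and such operators form a dense subset of $\mathcal{K}(L^2(\mathbb{R}^n))$ (they contain all finite-rank operators whose matrix elements are built from Schwartz functions). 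Hence $\overline{i}$ is well defined as inclusion.

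Now for the map $\overline{s}$: the inequality $\sup_{|(x,\xi)|=1}|a_{(0)}(x,\xi)| \le \inf_C\|op(a)+C\|$ shows that $s: op(a) \mapsto a_{(0)}|_{S^{2n-1}}$ descends to a contractive $*$-homomorphism on the quotient $\{op(a)\}/(\{op(a)\} \cap \mathcal{K})$, hence extends continuously and uniquely to the closure $\overline{s}: \overline{\Gamma_{cl}^0(\mathbb{R}^n)} \to C(S^{2n-1})$. To identify the kernel with $\mathcal{K}$: if $T = \lim op(a_n) \in \overline{\Gamma_{cl}^0}$ satisfies $\overline{s}(T)=0$, then $\|s(a_n)\|_\infty \to 0$, so by the factor-of-$2$ bound there exist $C_n \in \mathcal{K}$ with $\|op(a_n)+C_n\| \to 0$, forcing $T \in \overline{\mathcal{K}} = \mathcal{K}$. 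Surjectivity follows because $\overline{s}$ has closed range (any $*$-homomorphism between $C^*$-algebras does), and its range contains $s(\Gamma_{cl}^0) \supseteq C^\infty(S^{2n-1})$, which is dense in $C(S^{2n-1})$. Thus the sequence is exact, and $\overline{s}$ descends to a $*$-isomorphism $\overline{\Gamma_{cl}^0(\mathbb{R}^n)}/\mathcal{K} \xrightarrow{\sim} C(S^{2n-1})$; since isomorphisms of $C^*$-algebras are isometric, the norm of $A + \mathcal{K}$ in the quotient equals $\|a_{(0)}|_{S^{2n-1}}\|_\infty$, which is precisely the claimed equality.

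No step here looks genuinely difficult once the two inequalities are in hand; the only subtlety worth double-checking is that $\mathcal{K}(L^2(\mathbb{R}^n))$ is actually contained in the norm closure of the image of $op$ on $\Gamma_{cl}^0$ (not merely in the closure of $op$ on $\Gamma^0$), which is why I appeal to $\Gamma^{-\infty} \subset \Gamma_{cl}^0$ rather than to a broader density statement. The rest of the argument is a standard packaging exercise: the factor $2$ in the preceding proposition is harmless because the $C^*$-identity upgrades any bi-Lipschitz quotient identification to an isometric one.
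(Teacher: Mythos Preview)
Your proof is correct and follows essentially the same route as the paper: both use the two previously established inequalities to identify $\ker\overline{s}$ with $\mathcal{K}$, invoke the density of $C^\infty(S^{2n-1})$ in $C(S^{2n-1})$ together with the closedness of the range of a $C^{*}$-homomorphism for surjectivity, and then rely on the automatic isometry of $C^{*}$-isomorphisms to turn the factor-of-$2$ estimate into an exact equality. You are simply more explicit than the paper in spelling out why $\mathcal{K}(L^2(\mathbb{R}^n))\subset\overline{\Gamma_{cl}^0(\mathbb{R}^n)}$ and in unpacking the kernel computation, points the paper leaves implicit.
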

\begin{proof}
It is clear that the inclusion $\overline{i}$ is injective and the
inequalities we have just proved show that $\ker\overline{s}=\mbox{Im}(\overline{i})$.
In order to prove that $\overline{s}$ is surjective, we 
have only to observe that $s$ is surjective and $C^{\infty}(S^{2n-1})$
is dense in $C(S^{2n-1})$. The result now follows from the fact that
the image of an homomorphism of $C^{*}$ algebras is closed.
\end{proof}

\section{Fréchet spaces and algebras of classical Shubin operators.}

In order to deal with classical Shubin symbols, we will consider
the following topology in $\Gamma_{cl}^{m}(\mathbb{R}^{n},\mathcal{B}(\mathbb{C}^{q}))$.
\begin{cor}
The topology of $\Gamma_{cl}^{m}(\mathbb{R}^{n},\mathcal{B}(\mathbb{C}^{q}))$,
$m\in\mathbb{C}$, is the smallest locally convex topology that makes
the following maps continuous\[
a\in\Gamma_{cl}^{m}(\mathbb{R}^{n},\mathcal{B}(\mathbb{C}^{q}))\to a_{(m-j)}\in\Gamma^{(m-j)}(\mathbb{R}^{n},\mathcal{B}(\mathbb{C}^{q})),\]
\[
a\in\Gamma_{cl}^{m}(\mathbb{R}^{n},\mathcal{B}(\mathbb{C}^{q}))\to a-\sum_{l=0}^{j-1}\chi a_{(m-l)}(x,\xi)\in\Gamma^{m-j}(\mathbb{R}^{n},\mathcal{B}(\mathbb{C}^{q})),\]
where each $j\in\mathbb{N}_{0}$ defines a different map. In the above
expression $\chi$ is a zero excision function, $a_{(m-j)}$ are the
homogeneous terms of the asymptotic expansion of $a$ and $\Gamma^{(\mu)}(\mathbb{R}^{n},\mathcal{B}(\mathbb{C}^{q}))$
is the Fréchet space, whose topology is induced by
$C^{\infty}\left(\left(\mathbb{R}^{n}\times\mathbb{R}^{n}\right)\backslash\{0\},\mathcal{B}(\mathbb{C}^{q})\right)$.
\end{cor}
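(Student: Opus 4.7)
The plan is to exhibit an explicit generating family of seminorms on $\Gamma_{cl}^{m}(\mathbb{R}^{n},\mathcal{B}(\mathbb{C}^{q}))$, recognize it as the pullback of the target Fr\'echet-space seminorms by the listed maps, and then check that the resulting topology is Fr\'echet and independent of the choice of $\chi$.

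First I would write down, for each $j\in\mathbb{N}_{0}$, $\alpha,\beta\in\mathbb{N}_{0}^{n}$, and compact $K\subset\mathbb{R}^{2n}\setminus\{0\}$, the seminorms
\begin{align*}
q_{j,\alpha,\beta}^{(1)}(a) & = p_{\alpha,\beta}^{m-j}\!\left(a-\sum_{l=0}^{j-1}\chi a_{(m-l)}\right), \\
q_{j,K,\alpha,\beta}^{(2)}(a) & = \sup_{(x,\xi)\in K}\left\Vert \partial_{x}^{\beta}\partial_{\xi}^{\alpha}a_{(m-j)}(x,\xi)\right\Vert _{\mathcal{B}(\mathbb{C}^{q})}.
\end{align*}
By construction $q^{(1)}$ is the pullback of the standard Fr\'echet seminorms on $\Gamma^{m-j}(\mathbb{R}^{n},\mathcal{B}(\mathbb{C}^{q}))$ by the remainder map, while $q^{(2)}$ is the pullback of the $C^{\infty}$ topology on $\Gamma^{(m-j)}(\mathbb{R}^{n},\mathcal{B}(\mathbb{C}^{q}))$ by $a\mapsto a_{(m-j)}$. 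The topology they jointly generate is tautologically the coarsest locally convex one making all of the listed maps continuous; any competing locally convex topology for which those maps are continuous must dominate every $q^{(1)}$ and every $q^{(2)}$, hence be at least as fine. This is exactly the initial-topology characterization claimed in the corollary.

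Next I would verify that the result is a Fr\'echet space. Metrizability is immediate because the generating family is countable (after replacing $K$ by a countable compact exhaustion of $\mathbb{R}^{2n}\setminus\{0\}$). For completeness I would take a Cauchy sequence $(a_{k})$, set $b_{(m-j)}:=\lim_{k}(a_{k})_{(m-j)}$ in $\Gamma^{(m-j)}$ and $r_{j}:=\lim_{k}\bigl(a_{k}-\sum_{l<j}\chi (a_{k})_{(m-l)}\bigr)$ in $\Gamma^{m-j}$, and pass to the limit in the finite identity valid for each $k$ to conclude $r_{j}=r_{0}-\sum_{l<j}\chi b_{(m-l)}$. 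This yields $a:=r_{0}\in\Gamma_{cl}^{m}(\mathbb{R}^{n},\mathcal{B}(\mathbb{C}^{q}))$ with asymptotic expansion $\sum_{j}b_{(m-j)}$ and $a_{k}\to a$ in every seminorm. For independence of $\chi$, if $\chi'$ is another zero excision function then $\chi-\chi'$ has compact support, so $(\chi-\chi')a_{(m-l)}\in\mathcal{S}(\mathbb{R}^{2n},\mathcal{B}(\mathbb{C}^{q}))\subset\Gamma^{-\infty}$, and the $\chi$- and $\chi'$-versions of the remainder maps differ by such Schwartz combinations of the principal pieces, so each family of seminorms is continuous with respect to the other.

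The main obstacle is the compatibility check in the completeness step: identifying the independently obtained limits $b_{(m-j)}$ and $r_{j}$ as the homogeneous components and remainders of a single classical symbol with the expected asymptotic expansion. Once this is in hand, everything else reduces to the universal property of the initial topology and to the already-established Fr\'echet structures on $\Gamma^{m-j}$ and $\Gamma^{(m-j)}$.
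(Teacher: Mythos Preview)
Your proposal is correct. The paper does not actually prove this corollary: it is stated as a definition of the topology, and immediately afterward the paper simply asserts that ``the above topology is clearly a Fr\'echet one and it is independent of the zero excision function $\chi$'' without further argument. Your proof supplies precisely those missing details via the standard initial-topology construction, and the completeness and $\chi$-independence checks you outline are exactly the routine verifications the paper is implicitly taking for granted.
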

The above topology is clearly a Fréchet one and it is independent
of the zero excision function $\chi$. It is clear that the inclusion
$\Gamma_{cl}^{m}(\mathbb{R}^{n},\mathcal{B}(\mathbb{C}^{q}))\hookrightarrow\Gamma^{m}(\mathbb{R}^{n},\mathcal{B}(\mathbb{C}^{q}))$
is continuous. Furthermore the space $\Gamma^{-\infty}(\mathbb{R}^{n},\mathcal{B}(\mathbb{C}^{q}))=\mathcal{S}(\mathbb{R}^{2n},\mathcal{B}(\mathbb{C}^{q}))$
is closed in $\Gamma_{cl}^{m}(\mathbb{R}^{n},\mathcal{B}(\mathbb{C}^{q}))$,
for any $m\in\mathbb{C}$. Actually $\Gamma_{cl}^{m}(\mathbb{R}^{n},\mathcal{B}(\mathbb{C}^{q}))$
induces in $\Gamma^{-\infty}(\mathbb{R}^{n},\mathcal{B}(\mathbb{C}^{q}))$
the usual topology of $\mathcal{S}(\mathbb{R}^{n}\times\mathbb{R}^{n},\mathcal{B}(\mathbb{C}^{q}))$.

This topology is such that the involution $*:\Gamma_{cl}^{m}(\mathbb{R}^{n},\mathcal{B}(\mathbb{C}^{q}))\to\Gamma_{cl}^{m}(\mathbb{R}^{n},\mathcal{B}(\mathbb{C}^{q}))$
and the operation of composition $\sharp:\Gamma_{cl}^{m}(\mathbb{R}^{n},\mathcal{B}(\mathbb{C}^{q}))\times\Gamma_{cl}^{\mu}(\mathbb{R}^{n},\mathcal{B}(\mathbb{C}^{q}))\to\Gamma_{cl}^{m+\mu}(\mathbb{R}^{n},\mathcal{B}(\mathbb{C}^{q}))$
are continuous. In particular $\Gamma_{cl}^{0}(\mathbb{R}^{n},\mathcal{B}(\mathbb{C}^{q}))$
is a $*$-algebra of Fréchet.
\begin{defn}
Let $op:\Gamma_{cl}^{0}(\mathbb{R}^{n},\mathcal{B}(\mathbb{C}^{q}))\to\mathcal{B}(L^{2}(\mathbb{R}^{n})^{\oplus q})$
be the continuous injective function given by $a\mapsto op(a)$. We
define $\mathcal{A}$ as the $*$-Fréchet algebra $op(\Gamma_{cl}^{0}(\mathbb{R}^{n},\mathcal{B}(\mathbb{C}^{q})))$
with the topology induced by $\Gamma_{cl}^{0}(\mathbb{R}^{n},\mathcal{B}(\mathbb{C}^{q}))$. 
\end{defn}
Next we prove that $\mathcal{A}$ is a $\Psi^{*}$-algebra in the
sense of Gramsch \cite{Gramsch}.
\begin{prop}
Let $a\in\Gamma_{cl}^{\mu}(\mathbb{R}^{n},\mathcal{B}(\mathbb{C}^{q}))$,
$\mu\in\mathbb{C}$. Hence $a$ is elliptic iff $op(a):\mathcal{Q}^{s}(\mathbb{R}^{n})^{\oplus q}\to\mathcal{Q}^{s-Re(\mu)}(\mathbb{R}^{n})^{\oplus q}$
is Fredholm for every $s\in\mathbb{R}$.
\end{prop}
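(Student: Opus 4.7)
The plan is to prove the two implications separately. The forward direction will follow from the existence of a classical parametrix, while the backward direction will be reduced to the scalar, order-zero Fredholm criterion provided by Corollary~\ref{thm:norma modulo compactos}.

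For the implication \emph{$a$ elliptic $\Rightarrow$ $op(a)$ Fredholm}, I would take a classical parametrix $b\in\Gamma_{cl}^{-\mu}(\mathbb{R}^{n},\mathcal{B}(\mathbb{C}^{q}))$, whose existence was recalled in Section~1, with $a\sharp b-1$ and $b\sharp a-1$ regularizing. Regularizing symbols have Schwartz kernels, so the associated operators are compact between any pair of Shubin--Sobolev spaces. Therefore $op(b)$ is a two-sided parametrix of $op(a):Q^{s}(\mathbb{R}^{n})^{\oplus q}\to Q^{s-\mathrm{Re}(\mu)}(\mathbb{R}^{n})^{\oplus q}$ modulo compact operators, and Atkinson's theorem delivers the Fredholm property for every $s$.

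For the converse I would first reduce to order zero. Picking a scalar elliptic classical symbol $p\in\Gamma_{cl}^{-\mu}(\mathbb{R}^{n})$, for instance $p(x,\xi)=\chi(x,\xi)[(x,\xi)]^{-\mu}$ with $\chi$ a zero excision function, the composition $p\sharp a$ lies in $\Gamma_{cl}^{0}(\mathbb{R}^{n},\mathcal{B}(\mathbb{C}^{q}))$ with principal symbol $p_{(-\mu)}a_{(\mu)}$. By the forward direction, $op(p)$ is Fredholm on every Shubin--Sobolev space, so the hypothesis on $op(a)$ forces $op(p\sharp a)=op(p)op(a):L^{2}(\mathbb{R}^{n})^{\oplus q}\to L^{2}(\mathbb{R}^{n})^{\oplus q}$ to be Fredholm. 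Invoking the matrix-valued analogue of Corollary~\ref{thm:norma modulo compactos}, namely the exact sequence
\[0\to\mathcal{K}(L^{2}(\mathbb{R}^{n})^{\oplus q})\to\overline{\Gamma_{cl}^{0}(\mathbb{R}^{n},\mathcal{B}(\mathbb{C}^{q}))}\to C(S^{2n-1},\mathcal{B}(\mathbb{C}^{q}))\to 0,\]
Atkinson again translates Fredholmness into invertibility of the image of $op(p\sharp a)$ in the $C^{*}$-quotient, i.e.\ pointwise invertibility of the matrix $p_{(-\mu)}a_{(\mu)}$ on $S^{2n-1}$. Since $p_{(-\mu)}$ is a nowhere-vanishing scalar on the sphere, this forces $a_{(\mu)}(x,\xi)$ to be invertible on $S^{2n-1}$, and then by homogeneity on all of $\mathbb{R}^{2n}\setminus\{0\}$, which is precisely the ellipticity criterion recalled after \eqref{eq:sequencia exata simbolo}.

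The main obstacle is the matrix-valued extension of Corollary~\ref{thm:norma modulo compactos}, which the excerpt states only in the scalar case. I would establish it in a separate lemma before applying it here, noting that the construction of the unitaries $T_{\lambda}$ in Definition~\ref{def:operadores para norma mod compacto}, the limit argument of Proposition~\ref{pro:prop de Rlambda, Slambda, Tlambda}, and the Anti-Wick quantization all extend to matrix symbols by acting diagonally on the $\mathbb{C}^{q}$ factor. Modulo this routine verification, the proof of the proposition is essentially formal from Atkinson's theorem and the principal-symbol characterization of ellipticity.
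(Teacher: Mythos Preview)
Your argument is correct, but the converse direction takes a different route from the paper's. Both proofs reduce to order zero and both use the parametrix for the forward implication. For the converse, however, the paper does not pass through the exact sequence of Corollary~\ref{thm:norma modulo compactos} at all: it applies Proposition~\ref{pro:prop de Rlambda, Slambda, Tlambda} (which is already stated for matrix-valued symbols) directly. Writing $BA=I+C$ with $C$ compact and estimating
\[
\|u\|=\|(BA-C)T_{\lambda}u\|\le\|B\|\,\|T_{\lambda}^{-1}AT_{\lambda}u\|+\|CT_{\lambda}u\|\xrightarrow{\lambda\to\infty}\|B\|\,\|a_{(0)}(x_{0},\xi_{0})u\|,
\]
one obtains $\|a_{(0)}(x_{0},\xi_{0})u\|\ge\|B\|^{-1}\|u\|$ for every $u$, so the principal symbol is injective, hence invertible, at each point.

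Your approach via the matrix version of Corollary~\ref{thm:norma modulo compactos} together with Atkinson and spectral permanence for $C^{*}$-subalgebras is perfectly valid and conceptually clean, but it costs you the extra lemma: you need both inequalities (the $T_{\lambda}$ lower bound \emph{and} the Anti-Wick upper bound) in the matrix setting in order to identify the quotient with $C(S^{2n-1},\mathcal{B}(\mathbb{C}^{q}))$. The paper's argument is more economical precisely because it bypasses the upper bound and the quotient identification entirely, extracting pointwise invertibility straight from the $T_{\lambda}$ limit.
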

\begin{proof}
We prove only for $a\in\Gamma_{cl}^{0}(\mathbb{R}^{n},\mathcal{B}(\mathbb{C}^{q}))$.
The general result follows by the use of order reducing operators.
By a order reducing operator of order $m$, we mean an operator $op(a)$,
where $a\in\Gamma_{cl}^{m}(\mathbb{R}^{n},\mathcal{B}(\mathbb{C}^{q}))$
is an elliptic invertible symbol, that is, there exists $b\in\Gamma_{cl}^{m}(\mathbb{R}^{n},\mathcal{B}(\mathbb{C}^{q}))$ such that $a\sharp b=b\sharp a=1$. Hence $op(a)$ induces an isomorphism from $Q^{s}(\mathbb{R}^{n})^{\oplus q}$
to $Q^{s-m}(\mathbb{R}^{n})^{\oplus q}$ for any $s$. The construction
of these operators for the Shubin symbols can be done by the use of
the Anti-Wick quantization, as it is explained in section 1.7 of \cite{NicolaRodino}.

If $a$ is elliptic, then there exists $b\in\Gamma_{cl}^{0}(\mathbb{R}^{n},\mathcal{B}(\mathbb{C}^{q}))$
such that $a\sharp b\equiv b\sharp a\equiv1\,\mbox{mod}\Gamma^{-\infty}(\mathbb{R}^{n},\mathcal{B}(\mathbb{C}^{q}))$.
Hence $op(a)op(b)=1+C_{1}$ and $op(b)op(a)=1+C_{2}$ where $C_{1}$
and $C_{2}$ are compact operators in $Q^s(\mathbb{R}^{n})^{\oplus q}$
for any $s$. We conclude that $op(a)$
is a Fredholm operator.

Let us suppose that $A=op(a):L^2(\mathbb{R}^n)^{\oplus q}\to L^2(\mathbb{R}^n)^{\oplus q}$ is Fredholm. Then there exists an operator
$B\in\mathcal{B}(L^{2}(\mathbb{R}^{n})^{\oplus q})$ such that $BA=I+C$,
$C\in \mathcal{K}(L^2(\mathbb{R}^n)^{\oplus q})$. Let us now fix $(x_{0},\xi_{0})\in\mathbb{R}^{2n}\backslash(0,0)$
and let us denote the operators $T_{\lambda}(x_{0},\xi_{0})$, $\lambda>0$,
just by $T_{\lambda}$ (see definition \ref{def:operadores para norma mod compacto}).
Then we have\[
0<\left\Vert u\right\Vert _{L^{2}(\mathbb{R}^{n})^{\oplus q}}=\left\Vert T_{\lambda}u\right\Vert _{L^{2}(\mathbb{R}^{n})^{\oplus q}}=\left\Vert \left(BA-C\right)T_{\lambda}u\right\Vert _{L^{2}(\mathbb{R}^{n})^{\oplus q}}\le\left\Vert BT_{\lambda}T_{\lambda}^{-1}AT_{\lambda}u\right\Vert _{L^{2}(\mathbb{R}^{n})^{\oplus q}}+\left\Vert CT_{\lambda}u\right\Vert _{L^{2}(\mathbb{R}^{n})^{\oplus q}}\le\]
\[
\le\left\Vert B\right\Vert _{\mathcal{B}(L^{2}(\mathbb{R}^{n})^{\oplus q})}\left\Vert T_{\lambda}\right\Vert _{\mathcal{B}(L^{2}(\mathbb{R}^{n})^{\oplus q})}\left\Vert T_{\lambda}^{-1}AT_{\lambda}u\right\Vert _{L^{2}(\mathbb{R}^{n})^{\oplus q}}+\left\Vert CT_{\lambda}u\right\Vert _{L^{2}(\mathbb{R}^{n})^{\oplus q}}\to\left\Vert B\right\Vert _{\mathcal{B}(L^{2}(\mathbb{R}^{n})^{\oplus q})}\left\Vert a_{(0)}(x_{0},\xi_{0})u\right\Vert _{L^{2}(\mathbb{R}^{n})^{\oplus q}}.\]

We conclude that $\left\Vert a_{(0)}(x_{0},\xi_{0})\right\Vert _{\mathcal{B}(\mathbb{C}^{q})}\ge\left\Vert B\right\Vert _{\mathcal{B}(L^{2}(\mathbb{R}^{n})^{\oplus q})}^{-1}$
for all $(x_{0},\xi_{0})\ne(0,0)$. Hence $a$ is elliptic.
\end{proof}
If $A=op(a):L^{2}(\mathbb{R}^{n},\mathcal{B}(\mathbb{C}^{q}))\to L^{2}(\mathbb{R}^{n},\mathcal{B}(\mathbb{C}^{q}))$,
$a\in\Gamma_{cl}^{0}(\mathbb{R}^{n},\mathcal{B}(\mathbb{C}^{q}))$,
is invertible as an element of $\mathcal{B}(L^{2}(\mathbb{R}^{n}))$,
then $A$ is Fredholm and $a$ is elliptic. Therefore there exists
$b\in\Gamma_{cl}^{0}(\mathbb{R}^{n},\mathcal{B}(\mathbb{C}^{q}))$
such that $op(b)op(a)=I+R_{1}$ and $op(a)op(b)=I+R_{2}$, where $R_{1}$
and $R_{2}$ are regularizing. Using these relations it is easy to
get\[
A^{-1}=op(b)-op(b)R_{2}+R_{1}A^{-1}R_{2}\in\Gamma_{cl}^{0}(\mathbb{R}^{n},\mathcal{B}(\mathbb{C}^{q})).\]
Therefore $A^{-1}=op(c)$, for some $c\in\Gamma_{cl}^{0}(\mathbb{R}^{n},\mathcal{B}(\mathbb{C}^{q}))$.
We obtain the following corollary.
\begin{cor}
$\mathcal{A}$ is a $*$\textup{-Fréchet algebra that is spectrally
invariant in $\mathfrak{A}$, that is, }$\mathcal{A}\cap\mathcal{\mathfrak{A}}^{-1}=\mathcal{A}^{-1}$,\textup{
and it has a stronger topology then that induced by $\mathfrak{A}$.
It is therefore what Gramsch called an $\Psi^{*}$-algebra.}
\end{cor}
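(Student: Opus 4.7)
The plan is to assemble the three defining properties of a Gramsch $\Psi^{*}$-algebra from pieces already in place in this section. The Fr\'echet $*$-algebra structure of $\mathcal{A}$ transports from $\Gamma_{cl}^{0}(\mathbb{R}^{n},\mathcal{B}(\mathbb{C}^{q}))$ via the continuous linear bijection $op$: completeness uses injectivity of $op$, while continuity of the composition $\sharp$ and the involution $*$ in the prescribed topology of $\Gamma_{cl}^{0}$ was noted just before the previous proposition. For the stronger-topology clause, continuity of $op$ into $\mathcal{B}(L^{2}(\mathbb{R}^{n})^{\oplus q})$ (order-zero Shubin symbols are $L^{2}$-bounded) shows that the operator norm on $\mathcal{A}$ is dominated by the Fr\'echet seminorms, so the subspace topology from $\mathfrak{A}$ is coarser than the Fr\'echet one.

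The content is therefore concentrated in spectral invariance $\mathcal{A}\cap\mathfrak{A}^{-1}\subseteq\mathcal{A}^{-1}$. Given $A=op(a)\in\mathcal{A}$ that is invertible in $\mathfrak{A}$, I would observe that $A$ is in particular Fredholm on $L^{2}(\mathbb{R}^{n})^{\oplus q}$, so by the preceding proposition $a$ is elliptic. The parametrix construction from Section~1 then yields $b\in\Gamma_{cl}^{0}(\mathbb{R}^{n},\mathcal{B}(\mathbb{C}^{q}))$ with $op(b)op(a)=I+R_{1}$ and $op(a)op(b)=I+R_{2}$, where $R_{1},R_{2}$ are regularizing. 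Eliminating $op(b)$ between these two identities (as already written out in the paragraph above the corollary) gives
\[
A^{-1}=op(b)-op(b)R_{2}+R_{1}A^{-1}R_{2}.
\]

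The first two terms are manifestly in $\mathcal{A}$, since regularizing operators come from symbols in $\Gamma^{-\infty}\subset\Gamma_{cl}^{0}$ and $\sharp$ preserves $\Gamma_{cl}^{0}$. The remaining and only genuinely non-trivial point is that $R_{1}A^{-1}R_{2}$ is itself regularizing, and this I expect to be the main obstacle. Writing $R_{j}=op(r_{j})$ with $r_{j}\in\mathcal{S}(\mathbb{R}^{2n},\mathcal{B}(\mathbb{C}^{q}))$, each $R_{j}$ has a Schwartz integral kernel, and I would invoke the standard principle that sandwiching any $L^{2}$-bounded operator between two operators with Schwartz kernel produces again an operator with Schwartz kernel: the kernel of $R_{1}A^{-1}R_{2}$ at $(x,y)$ can be expressed as an $L^{2}$-pairing of $A^{-1}$ against $r_{1}(x,\cdot)$ and $r_{2}(\cdot,y)$, and polynomial factors in $(x,y)$ as well as all derivatives in $(x,y)$ can be pushed past $A^{-1}$ onto the Schwartz arguments, leaving the $L^{2}$-boundedness of $A^{-1}$ to control the pairing uniformly. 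Hence $R_{1}A^{-1}R_{2}=op(r)$ with $r\in\Gamma^{-\infty}\subset\Gamma_{cl}^{0}$, so $A^{-1}\in\mathcal{A}$ and the three properties together yield the $\Psi^{*}$-algebra assertion.
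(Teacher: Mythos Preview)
Your proposal is correct and follows exactly the paper's approach: the paragraph immediately preceding the corollary already carries out the spectral-invariance argument via ellipticity, the parametrix $b$, and the identity $A^{-1}=op(b)-op(b)R_{2}+R_{1}A^{-1}R_{2}$, and the corollary is stated without further proof. You have simply made explicit the one step the paper leaves as ``easy to get,'' namely that $R_{1}A^{-1}R_{2}$ is regularizing, and your Schwartz-kernel justification is the standard one.
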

The group of invertible elements of a $\Psi^{*}$-algebra is always
open. In any unital Fréchet algebra with an open group of invertible
elements, the inversion is continuous (see Waelbroeck \cite{Waelbroeck}).
We then conclude that the inversion in the algebras $\mathcal{A}$
and $\Gamma_{cl}^{0}(\mathbb{R}^{n},\mathcal{B}(\mathbb{C}^{q}))$
is continuous.

Now let us consider $a\in\Gamma_{cl}^{0}(\mathbb{R}^{n},\mathcal{B}(\mathbb{C}^{q}))$.
Due to the spectral invariance we know that $\sigma(a)=\sigma(op(a))$,
where $\sigma$ denotes the spectrum of $a$ and $op(a)$ in $\mathcal{A}$
and $\mathfrak{A}$ respectively. Note that we are identifying the
two algebras $\mathcal{A}$ and $\Gamma_{cl}^{0}(\mathbb{R}^{n},\mathcal{B}(\mathbb{C}^{q}))$.
Let $f$ be an analytic function in $\Omega$, an open set that contains
$\sigma(a)$. Next we show that $f(op(a))$, the operator constructed
using the holomorphic functional calculus, belongs to $\mathcal{A}$.

Let $\lambda\in\mathbb{C}\backslash\sigma(a)$. Then $\lambda-a$
has an inverse. Therefore $(\lambda-a)^{-1}=b(\lambda)\in\Gamma_{cl}^{0}(\mathbb{R}^{n},\mathcal{B}(\mathbb{C}^{q}))$
and $(\lambda-op(a))^{-1}=op(b(\lambda))$. As $\lambda\in\Omega\backslash\sigma(a)\mapsto\lambda-a\in\Gamma_{cl}^{0}(\mathbb{R}^{n},\mathcal{B}(\mathbb{C}^{q}))$
is continuous, we conclude that $\lambda\in\Omega\backslash\sigma(a)\mapsto b(\lambda)\in\Gamma_{cl}^{0}(\mathbb{R}^{n},\mathcal{B}(\mathbb{C}^{q}))$
is also continuous.

Now if $\gamma$ is a curve around $\sigma(a)$ in the positive direction
and contained in $\Omega$, we conclude from the continuity of the
inversion that the function $f(a)$ defined by the expression $\frac{1}{2\pi i}\int_{\gamma}f(\lambda)b(x,\xi,\lambda)d\lambda$
belongs to $\Gamma_{cl}^{0}(\mathbb{R}^{n},\mathcal{B}(\mathbb{C}^{q}))$.
Moreover if $b\sim\sum_{j}b_{(-j)}$, the terms of the asymptotic
expansion of $f(a)$ are\[
\frac{1}{2\pi i}\int_{\gamma}f(\lambda)b_{(-j)}(x,\xi,\lambda)d\lambda\in\Gamma^{(-j)}(\mathbb{R}^{n},\mathcal{B}(\mathbb{C}^{q})).\]

This implies that under the hypothesis above, the operator $f(op(a))$
constructed using the usual holomorphic functional calculus in $\mathcal{B}(L^{2}(\mathbb{R}^{n})^{\oplus q})$
is a pseudodifferential operator, whose symbol is $\frac{1}{2\pi i}\int_{\gamma}f(\lambda)b(x,\xi,\lambda)d\lambda$. Similar arguments to these were also used in a more difficult context in \cite{Schrohehinfty}.

\section{Complex powers of Shubin operators.}

In this section, we recall the construction of complex powers and
sectorial projection. The details can be found for a similar context,
the SG calculus, in the article of Mannicia, Schrohe and Seiler \cite{Schrohecomplex}, see also Robert \cite{Robert} and Kumano-go \cite{Kumanogo}.
For Shubin symbols the constructions are the same, even simpler actually, and can be found in \cite{Tese}.

\subsection{The construction of complex powers.}

Let $\Lambda$ be a sector of $\mathbb{C}$, $\Lambda=\{re^{i\varphi}\in\mathbb{C};\,0\le r<\infty,\,\theta\le\varphi\le\theta'\}$,
also denoted by $\Lambda_{\theta,\theta'}$. We will first define
the concept of $\Lambda$-ellipticity.
\begin{defn}
A classical symbol $a\in\Gamma_{cl}^{m}(\mathbb{R}^{n},\mathcal{B}(\mathbb{C}^{q}))$,
$m\in\mathbb{R}$ and $m\ge0$, is $\Lambda$-elliptic iff \[
spec(a_{(m)}(x,\xi))\cap\Lambda=\emptyset,\,\,\,\forall|(x,\xi)|\ne0,\]
 where $a_{(m)}$ is the principal symbol of $a$ and $spec$ is the
spectrum of a matrix in $\mathcal{B}(\mathbb{C}^{q})$. In particular,
$a$ is elliptic.
\end{defn}

Using the same analysis done by Mannicia, Schrohe and Seiler \cite{Schrohecomplex},
we can construct a parametrix $b(\lambda)$ of $(\lambda-a)$ in the following way. We define\[
b_{-m}(x,\xi,\lambda):=(\lambda-a_{(m)}(x,\xi))^{-1},\]
and
\[
b_{-m-k}(x,\xi):=\sum_{j+2|\sigma|+p=k,\, j<k}\frac{1}{\alpha!}(\partial_{\xi}^{\sigma}b_{-m-j})(x,\xi,\lambda)(D_{x}^{\sigma}a_{(m-p)})(x,\xi)b_{-m}(x,\xi,\lambda).\]

Using standards techniques of asymptotic summation, one can construct a parametrix $b(\lambda)$ such that for any $N\ge1$ and for any zero excision function $\chi$, there
is a constant $C>0$ such that \[
\left\Vert \partial_{\xi}^{\alpha}\partial_{x}^{\beta}\left(b(x,\xi,\lambda)-\sum_{k=0}^{N-1}\chi(x,\xi)b_{-m-k}(x,\xi,\lambda)\right)\right\Vert _{\mathcal{B}(\mathbb{C}^{q})}\le C(|\lambda|+[(x,\xi)]^{m})^{-3}[(x,\xi)]^{2m-N-|\alpha|-|\beta|}\]
and\[
\left\Vert \partial_{\xi}^{\alpha}\partial_{x}^{\beta}b(x,\xi,\lambda)\right\Vert _{\mathcal{B}(\mathbb{C}^{q})}\le C(|\lambda|+[(x,\xi)]^{m})^{-1}[(x,\xi)]^{-|\alpha|-|\beta|}.\]

This
parametrix $b$ is a $C^{\infty}$ function of $(x,\xi,\lambda)\in\mathbb{R}^{2n}\times\Lambda$,
such that for each $\lambda$, $b(x,\xi,\lambda)$ belongs to $\Gamma_{cl}^{-m}(\mathbb{R}^{n},\mathcal{B}(\mathbb{C}^{q}))$.
Moreover for any continuous seminorm $q$ of $\Gamma^{-m}(\mathbb{R}^{n},\mathcal{B}(\mathbb{C}^{q}))$,
 $[\lambda]q\left(b(\lambda)\right)$ are bounded for
$\lambda\in\Lambda$ and for any continuous seminorm $\tilde{q}$ of $\Gamma^{-\infty}(\mathbb{R}^{n},\mathcal{B}(\mathbb{C}^{q}))$,
 $[\lambda]^2\tilde{q}\left((\lambda-a)^{-\sharp}-b(\lambda))\right)$ are bounded for
$\lambda\in\Lambda$, where $^{-\sharp}$ indicates the inversion in the symbolic calculus. This parametrix is such that\[
(\lambda-a)\sharp b(\lambda)=1+r_{1}(\lambda),\]
\[
b(\lambda)\sharp(\lambda-a)=1+r_{2}(\lambda).\]

In the above expression $r_{i}(\lambda)\in\Gamma^{-\infty}(\mathbb{R}^{n},\mathcal{B}(\mathbb{C}^{q}))$
for $i=1$ and $2$. For any continuous seminorm $q$ of $\Gamma^{-\infty}(\mathbb{R}^{n},\mathcal{B}(\mathbb{C}^{q}))$,
that is, of $\mathcal{S}(\mathbb{R}^{n}\times\mathbb{R}^{n},\mathcal{B}(\mathbb{C}^{q}))$,
we have that $[\lambda]q\left(r_{i}(\lambda)\right)$ are bounded
for $\lambda\in\Lambda$. In particular the resolvent set of the unbounded
operator\[
A:=op(a):\mathcal{Q}^{m}(\mathbb{R}^{n})^{\oplus q}\subset L^{2}(\mathbb{R}^{n})^{\oplus q}\to L^{2}(\mathbb{R}^{n})^{\oplus q}\]
 contains all $\lambda\in\Lambda$ of sufficiently large absolute
value.

Our operators must satisfy conditions stronger than only $\Lambda$-ellipticity.
In fact we will consider only operators $A=op(a)$ such that 
\begin{enumerate}
\item $a\in\Gamma_{cl}^{m}(\mathbb{R}^{n},\mathcal{B}(\mathbb{C}^{q}))$
is $\Lambda$-elliptic, with $m>0$.
\item $\lambda-op(a)$ is invertible for all $0\ne\lambda\in\Lambda$.
\end{enumerate}
We call these conditions as assumption $(A)$. The spectrum of $op(a):\mathcal{Q}^{m}(\mathbb{R}^{n})^{\oplus q}\subset L^{2}(\mathbb{R}^{n})^{\oplus q}\to L^{2}(\mathbb{R}^{n})^{\oplus q}$,
where $a\in\Gamma_{cl}^{m}(\mathbb{R}^{n},\mathcal{B}(\mathbb{C}^{q}))$
is elliptic and $m>0$, is always either equal to $\mathbb{C}$ or
pure point without acummulation point \cite[Proposition 4.2.5]{NicolaRodino}. Hence if $a$ is $\Lambda$-elliptic,
the spectrum has to be pure point and we can always choose a subsector
$\Lambda'\subset\Lambda$, such that the second condition is satisfied.
We note also that in this case $0$ is at most an isolated spectral
point.

We are now in condition to define complex powers and sectorial projections. Let $\Lambda$ and $\tilde{\Lambda}$ be two different
sectors of $\mathbb{C}$. Suppose that $a$ is $\Lambda$ and $\tilde{\Lambda}$
elliptic. Let us define the following curves. In the following $\theta$ and $\theta'$ are angles whose rays $\{re^{i\theta},0\le r<\infty\}$ and $\{re^{i\theta'},0\le r<\infty\}$ are
contained on $\Lambda$ and $\tilde{\Lambda}$, respectively.

\[
\Gamma_{\theta,\theta'}=\{\rho e^{i\theta},\infty>\rho\ge\epsilon\}\cup\{\epsilon e^{it};\theta\le t\le\theta'\}\cup\{\rho e^{i\theta'},\epsilon\le\rho<\infty\}\]

and \[
\Gamma_{\theta}=\{\rho e^{i(\theta-2\pi)},\infty>\rho\ge\epsilon\}\cup\{\epsilon e^{it};\theta-2\pi\le t\le\theta\}\cup\{\rho e^{i\theta},\epsilon\le\rho<\infty\},\]
where $\epsilon>0$ is such that $\lambda-op(a)$ is invertible for all $\lambda \in B_{\epsilon}(0)\backslash\{0\}$.

From now on we will denote by $\gamma$ any one of these curves. The fact that the inverse of $\lambda-a$ can be described by the above parameter dependent parametrix implies the following:
\begin{prop}
\label{pro:integral e simbolo}Let $a\in\Gamma_{cl}^{m}(\mathbb{R}^{n},\mathcal{B}(\mathbb{C}^{q}))$
be a symbol that satisfies assumption $(A)$. Then $\int_{\gamma}\lambda^{z}(a-\lambda)^{-\sharp}d\lambda\in\Gamma_{cl}^{mz}(\mathbb{R}^{n},\mathcal{B}(\mathbb{C}^{q}))$
for any $z\in\mathbb{C}$ such that $Re(z)<0$.\end{prop}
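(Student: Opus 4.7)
The plan is to exploit the parameter-dependent parametrix $b(\lambda)$ of $\lambda - a$ constructed above. Writing $(\lambda - a)^{-\sharp} = b(\lambda) + r(\lambda)$ with $r(\lambda) \in \Gamma^{-\infty}(\mathbb{R}^{n}, \mathcal{B}(\mathbb{C}^{q}))$ satisfying $[\lambda]^{2} \tilde{q}(r(\lambda)) \le C_{\tilde{q}}$ for every continuous Schwartz seminorm $\tilde{q}$, and using that $|\lambda^{z}| = |\lambda|^{Re(z)}$ with $Re(z) < 0$, the map $\lambda \mapsto \lambda^{z} r(\lambda)$ is absolutely integrable along $\gamma$ in every seminorm of $\mathcal{S}(\mathbb{R}^{2n}, \mathcal{B}(\mathbb{C}^{q}))$. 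Hence $\int_{\gamma} \lambda^{z} r(\lambda)\, d\lambda \in \Gamma^{-\infty} \subset \Gamma_{cl}^{mz}$, and it remains to prove that $J(x, \xi) := \int_{\gamma} \lambda^{z} b(x, \xi, \lambda)\, d\lambda$ belongs to $\Gamma_{cl}^{mz}(\mathbb{R}^{n}, \mathcal{B}(\mathbb{C}^{q}))$.

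Next I would use the asymptotic expansion of $b$, decomposing $b(\lambda) = \sum_{k=0}^{N-1} \chi\, b_{-m-k}(\lambda) + b_{N}^{\mathrm{rem}}(\lambda)$. The stated bound $\|\partial_{\xi}^{\alpha}\partial_{x}^{\beta} b_{N}^{\mathrm{rem}}(x,\xi,\lambda)\| \le C (|\lambda| + [(x,\xi)]^{m})^{-3} [(x,\xi)]^{2m - N - |\alpha| - |\beta|}$, combined with the elementary estimate $\int_{\gamma} |\lambda|^{Re(z)}(|\lambda| + s)^{-3}\,|d\lambda| \le C_{z}\, s^{Re(z) - 2}$ (valid when $Re(z) \in (-1, 2)$; for $Re(z) \le -1$ the minimum radius $\epsilon > 0$ on $\gamma$ gives instead an $s^{-3}$ bound) applied with $s = [(x,\xi)]^{m}$, shows that $\int_{\gamma} \lambda^{z} b_{N}^{\mathrm{rem}}(\lambda)\, d\lambda$ has order at most $\max(m Re(z), -m) - N$, which tends to $-\infty$ as $N \to \infty$. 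This exhibits a candidate classical asymptotic expansion $J \sim \sum_{k \ge 0} I_{k}$ where $I_{k}(x, \xi) := \int_{\gamma} \lambda^{z} b_{-m-k}(x, \xi, \lambda)\, d\lambda$ for $(x, \xi) \ne 0$.

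To finish, I would establish that each $I_{k}$ extends to a homogeneous function of degree $mz - k$ on $\mathbb{R}^{2n} \setminus \{0\}$. Starting from $b_{-m}(x,\xi,\lambda) = (\lambda - a_{(m)}(x,\xi))^{-1}$ and the homogeneity of $a_{(m)}$, one immediately gets the quasi-homogeneity $b_{-m}(tx, t\xi, t^{m}\lambda) = t^{-m} b_{-m}(x, \xi, \lambda)$; an induction on $k$ using the recursive definition of $b_{-m-k}$ (each $\xi$-derivative shifts the weight by $-|\sigma|$ and each factor $D_{x}^{\sigma} a_{(m-p)}$ contributes homogeneity $m - p - |\sigma|$) yields $b_{-m-k}(tx, t\xi, t^{m}\lambda) = t^{-m-k} b_{-m-k}(x, \xi, \lambda)$. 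Substituting $\lambda = t^{m}\mu$ in $I_{k}(tx, t\xi)$ rescales the contour $\gamma$ to $\gamma/t^{m}$; $\Lambda$-ellipticity ensures that every eigenvalue of $a_{(m)}(x, \xi)$, which are the only singularities of $b_{-m-k}$ in $\mu$, lies outside $\Lambda$, so Cauchy's theorem allows us to deform $\gamma/t^{m}$ back to $\gamma$ through the resolvent set, giving $I_{k}(tx, t\xi) = t^{mz - k} I_{k}(x, \xi)$.

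The main technical obstacles are the loss of sharpness of the remainder estimate when $Re(z)$ is very negative (overcome by simply enlarging $N$, since all that matters is that the remainder orders tend to $-\infty$) and the careful bookkeeping of the branch of $\lambda^{z}$ under the rescaling $\lambda = t^{m}\mu$: one must fix the branch cut so as to avoid both $\gamma$ and $\gamma/t^{m}$ for all $t > 0$, which is naturally achieved by placing the cut along an appropriate ray inside the sector $\Lambda$ that $\gamma$ was designed to avoid.
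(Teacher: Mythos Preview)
Your proposal is correct and is precisely the argument the paper has in mind: the paper does not give an explicit proof of this proposition, it merely states that the result follows from the parameter-dependent parametrix $b(\lambda)$ and the estimates listed immediately before, and your write-up is the standard way to unpack that. The only minor refinement worth noting is that the homogeneity identity $I_k(tx,t\xi)=t^{mz-k}I_k(x,\xi)$ should first be checked for $|(x,\xi)|$ on the unit sphere and $t\ge 1$ (so that all eigenvalues of $a_{(m)}(x,\xi)$ lie outside both arcs of radii $\epsilon$ and $\epsilon/t^{m}$, making the contour deformation legitimate), after which one extends $I_k$ to $\mathbb{R}^{2n}\setminus\{0\}$ by homogeneity; this is routine and implicit in your sketch.
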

\begin{cor}
Under the above hypothesis and for  $Re(z)<0$, the bounded operator on $L^2(\mathbb{R}^n)^{\oplus q}$ given by the integral \[
F(A)=\int_{\gamma}\lambda^{z}(op(a)-\lambda)^{-1}d\lambda.\] is a pseudodifferential operator with symbol $\int_{\gamma}\lambda^{z}(a-\lambda)^{-\sharp}d\lambda\in\Gamma_{cl}^{mz}(\mathbb{R}^{n},\mathcal{B}(\mathbb{C}^{q}))$.

\end{cor}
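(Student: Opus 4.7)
The claim is that the bounded-operator integral $F(A)$ on $L^{2}(\mathbb{R}^{n})^{\oplus q}$ coincides with $op(c)$, where $c:=\int_{\gamma}\lambda^{z}(a-\lambda)^{-\sharp}d\lambda$ is the symbol already produced by Proposition \ref{pro:integral e simbolo}. Since that proposition already delivers $c\in\Gamma_{cl}^{mz}(\mathbb{R}^{n},\mathcal{B}(\mathbb{C}^{q}))$, the only task left is to identify the two integrals. My plan is to transfer the symbol-level integral to the operator level through the continuity of $op$.

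First, I would verify that $F(A)$ is a Bochner integral in $\mathcal{B}(L^{2}(\mathbb{R}^{n})^{\oplus q})$. Assumption $(A)$ together with the parameter-dependent parametrix estimates recalled just above Proposition \ref{pro:integral e simbolo}, combined with the $L^{2}$-continuity of $op$ on $\Gamma^{0}$, yield a resolvent bound of the shape $\|(op(a)-\lambda)^{-1}\|_{\mathcal{B}(L^{2})}\le C(1+|\lambda|)^{-1}$ for $\lambda\in\gamma$. Multiplied by $|\lambda|^{Re(z)}$ this is integrable along $\gamma$: at infinity because $Re(z)<0$ gives an exponent strictly less than $-1$, and near $0$ trivially because $\gamma$ is kept at distance $\ge\epsilon$ from the origin.

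Next, under assumption $(A)$ and the spectral invariance of $\mathcal{A}$ established in the previous section (applied after composing with an order-reducing operator to accommodate $m>0$), the symbolic inverse $(\lambda-a)^{-\sharp}$ is a genuine inverse in $\Gamma_{cl}^{-m}(\mathbb{R}^{n},\mathcal{B}(\mathbb{C}^{q}))$ rather than merely a parametrix modulo smoothing, and $op((\lambda-a)^{-\sharp})=(\lambda-op(a))^{-1}$ as bounded operators on $L^{2}(\mathbb{R}^{n})^{\oplus q}$ for every $\lambda\in\gamma$. Therefore the two integrands agree pointwise through $op$.

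The final step is the interchange. Since $Re(mz)<0$, the quantization map $op:\Gamma^{mz}(\mathbb{R}^{n},\mathcal{B}(\mathbb{C}^{q}))\to\mathcal{B}(L^{2}(\mathbb{R}^{n})^{\oplus q})$ is continuous and linear, and Proposition \ref{pro:integral e simbolo} asserts convergence of the symbol integral in the Fr\'echet topology of $\Gamma_{cl}^{mz}$. A continuous linear map commutes with Bochner integrals, so
$$op(c)=op\left(\int_{\gamma}\lambda^{z}(a-\lambda)^{-\sharp}d\lambda\right)=\int_{\gamma}\lambda^{z}\,op((a-\lambda)^{-\sharp})\,d\lambda=\int_{\gamma}\lambda^{z}(op(a)-\lambda)^{-1}d\lambda=F(A).$$
I do not expect a real obstacle beyond bookkeeping the uniform seminorm estimates needed for the Bochner convergence; the substantive work is already contained in Proposition \ref{pro:integral e simbolo}, on which this corollary rests.
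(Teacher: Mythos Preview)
Your proposal is correct and is precisely the natural way to flesh out this corollary; the paper itself gives no proof, treating the statement as an immediate consequence of Proposition~\ref{pro:integral e simbolo} and the parametrix construction that precedes it. Your three steps---Bochner integrability from the resolvent decay, the identification $op((\lambda-a)^{-\sharp})=(\lambda-op(a))^{-1}$ via spectral invariance, and the interchange of $op$ with the integral by continuity---are exactly what the omitted argument would contain.
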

We are now in conditions to define the complex powers of a pseudodifferential
operator and the sectorial projection.
\begin{defn}
\label{def:complexpowers}Let $a\in\Gamma_{cl}^{m}(\mathbb{R}^{n},\mathcal{B}(\mathbb{C}^{q}))$,
$m>0$, be a classical $\Lambda$-elliptic pseudodifferential operator
that satisfies the assumption $(A)$ and $\theta$ be an angle whose
ray $\{re^{i\theta};0\le r<\infty\}$ is contained in $\Lambda$.
Let us define $\lambda_{\theta}^{z}=|\lambda|^{z}e^{iz\mbox{ar\ensuremath{g_{\theta}}}\lambda}$,
for $\lambda\in\mathbb{C}\backslash\{te^{i\theta};t\ge0\}$, where
$\mbox{ar\ensuremath{g_{\theta}}}\lambda$ assumes values in $]\theta-2\pi,\theta[$.
The complex powers of $a$ are the pseudodifferential operators, whose
symbols are denoted by $a_{\theta}^{z}\in\Gamma_{cl}^{mz}(\mathbb{R}^{n},\mathcal{B}(\mathbb{C}^{q}))$,
given by\[
op(a_{\theta}^{z})=\frac{1}{2\pi i}\int_{\Gamma_{\theta}}\lambda_{\theta}^{z}(\lambda-op(a))^{-1}d\lambda,\,\,\, Re(z)<0.\]

For $Re(z)>0$, we define $op(a_{\theta}^{z}):=op(a)^{k}op(a_{\theta}^{z-k})$,
where $k\in\mathbb{Z}$ is such that $Re(z)<k$.

Now let $a\in\Gamma_{cl}^{m}(\mathbb{R}^{n},\mathcal{B}(\mathbb{C}^{q}))$,
$m>0$, be a $\Lambda$ and $\tilde{\Lambda}$-elliptic pseudodifferential
operator that satisfies the assumption $(A)$ for both sectors and let
$\theta$ and $\theta'$ be angles whose rays are contained in the
sectors $\Lambda$ and $\tilde{\Lambda}$, respectively. We define
the sectorial projection of $a$ associated with the angular sector
$\Lambda_{\theta,\theta'}$ as the operator\[
\Pi_{\theta,\theta'}(op(a))=op(a)\left(\frac{1}{2\pi i}\int_{\Gamma_{\theta,\theta'}}\lambda^{-1}(op(a)-\lambda)^{-1}d\lambda\right).\]

The symbol of this operator will be denoted by $\Pi_{\theta,\theta'}(a)\in\Gamma_{cl}^{0}(\mathbb{R}^{n},\mathcal{B}(\mathbb{C}^{q}))$.
It is clear that \[
\Pi_{\theta,\theta'}(a)=a\sharp\left(\frac{1}{2\pi i}\int_{\Gamma_{\theta,\theta'}}\lambda^{-1}(a-\lambda){}^{-\sharp}d\lambda\right),\]
and 

\[
a_{\theta}^{z}=a^{\sharp k}\sharp\left(\frac{1}{2\pi i}\int_{\Gamma_{\theta}}\lambda_{\theta}^{z-k}(\lambda-a)^{-\sharp}d\lambda\right),\]
where $Re(z)<k$ and $a^{\sharp k}$ denotes $a\sharp...\sharp a$,
$k$ times.
\end{defn}

\begin{defn}
An idempotent classical Shubin pseudodifferential operator is a pseudodifferential
operator $op(a)$ with $a\in\Gamma_{cl}^{0}(\mathbb{R}^{n},\mathcal{B}(\mathbb{C}^{q}))$
or $a\in\Gamma^{-\infty}(\mathbb{R}^{n},\mathcal{B}(\mathbb{C}^{q}))$,
such that $op(a)op(a)=op(a)$.
\end{defn}
Using the same arguments of Seeley \cite{Seeley}, Shubin \cite{Shubin}
and Ponge \cite{Ponge}, we can prove the following Theorem.
\begin{thm}
\label{thm:propriedades complex and projection} Let $a\in\Gamma_{cl}^{m}(\mathbb{R}^{n},\mathcal{B}(\mathbb{C}^{q}))$,
$m>0$, be a $\Lambda$-elliptic pseudodifferential operator that
satisfies the assumption $(A)$ and $\theta$ be an angle contained
in $\Lambda$. Then $a^{z}$ is well defined, that is, for $Re(z)\ge0$
it does not depend on the choice of $k\in\mathbb{Z}$ such that $Re(z)<k$. It is
additive, that is $op(a_{\theta}^{z})op(a_{\theta}^{s})=op(a_{\theta}^{z+s})$,
for $z$ and $s\in\mathbb{C}$.

For $k\in\mathbb{Z}\backslash\{0\}$ we have that $op(a_{\theta}^{k})=op(a)^{k}$.
For $k=0$, we have $op(a_{\theta}^{0})=op(a^{-1})op(a)=I-op(\Pi_{0}(a))$, where $\Pi_{0}(a)\in \Gamma^{-\infty}(\mathbb{R}^{n},\mathcal{B}(\mathbb{C}^{q}))$ and $op(\Pi_{0}(a))$ is
an idempotent operator that has its image in the kernel of $op(a)$.

The sectorial projections of $a$ are idempotent operators that belong
to $\Gamma_{cl}^{0}(\mathbb{R}^{n},\mathcal{B}(\mathbb{C}^{q}))$.
\end{thm}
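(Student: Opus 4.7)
The plan is to follow the contour integral techniques of Seeley and Shubin, using that the symbol $b(\lambda)$ constructed earlier furnishes a parameter-dependent parametrix for $\lambda-a$ with uniform estimates on $\Lambda$, and that the algebra $\Gamma_{cl}^{m}(\mathbb{R}^{n},\mathcal{B}(\mathbb{C}^{q}))$ is closed under composition. The whole argument reduces to manipulations of Cauchy-type integrals along the keyhole contours $\Gamma_{\theta}$ and $\Gamma_{\theta,\theta'}$, together with the resolvent identity $(\lambda-A)^{-1}(\mu-A)^{-1}=[(\mu-A)^{-1}-(\lambda-A)^{-1}]/(\lambda-\mu)$.

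First I would establish additivity in the regime $Re(z),Re(s)<0$. Writing
\[
op(a_{\theta}^{z})op(a_{\theta}^{s})=\frac{1}{(2\pi i)^{2}}\int_{\Gamma_{\theta}}\int_{\Gamma_{\theta}'}\lambda_{\theta}^{z}\mu_{\theta}^{s}(\lambda-op(a))^{-1}(\mu-op(a))^{-1}d\mu\, d\lambda,
\]
where $\Gamma_{\theta}'$ is a small perturbation of $\Gamma_{\theta}$ chosen so that $\Gamma_{\theta}$ lies strictly to one side of $\Gamma_{\theta}'$, I would apply the resolvent identity and split the double integral into two pieces. In the piece where one performs the $\mu$-integration with $\lambda$ in the exterior, Cauchy's theorem gives zero because the corresponding integrand has no enclosed singularity; the other piece collapses, via the standard evaluation $\frac{1}{2\pi i}\int_{\Gamma_{\theta}'}\mu_{\theta}^{s}(\lambda-\mu)^{-1}d\mu=\lambda_{\theta}^{s}$, to $op(a_{\theta}^{z+s})$. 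Together with the direct computation $\int_{\Gamma_{\theta}}\lambda_{\theta}^{w}d\lambda=0$ for $Re(w)<-1$ (obtained by matching the contributions of the two rays with the small circular arc), this yields the basic relation $op(a)\,op(a_{\theta}^{w})=op(a_{\theta}^{w+1})$ for $Re(w)<-1$, which propagates to all $w$ by iteration.

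From here the remaining statements follow quickly. Well-definedness of $op(a_{\theta}^{z})$ for $Re(z)\ge0$ reduces to showing that the definition does not change when $k$ is replaced by $k+1$, and this is exactly the relation of the preceding paragraph. Additivity in general then follows from additivity for negative real parts because $op(a)$ commutes with the resolvent, so one can freely push powers of $op(a)$ between the factors. For $k\in\mathbb{N}$, applying $op(a)$ repeatedly to $op(a_{\theta}^{-k})$ gives $op(a_{\theta}^{0})\cdot op(a)^{k}$, which by an elementary inductive check coincides with $op(a)^{k}$ once one knows that $op(a_{\theta}^{0})$ acts as the identity on the range of $op(a)$. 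For $k\in\mathbb{Z}_{<0}$ one uses additivity $op(a_{\theta}^{k})op(a_{\theta}^{-k})=op(a_{\theta}^{0})$ and compares with $op(a)^{k}$ on the complement of the generalized kernel.

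The genuinely delicate step, which I expect to be the main obstacle, is the analysis of $op(a_{\theta}^{0})$ and of the sectorial projection. The point is that $0$ may lie in the spectrum of $op(a)$, so the contour $\Gamma_{\theta}$ separates $0$ from the rest of $\sigma(op(a))$ rather than avoiding it. By deforming $\Gamma_{\theta}$ to a small circle enclosing $0$, Cauchy's theorem identifies
\[
op(a_{\theta}^{0})=I-op(\Pi_{0}(a)),\qquad op(\Pi_{0}(a))=\frac{1}{2\pi i}\oint_{|\lambda|=r}(\lambda-op(a))^{-1}d\lambda,
\]
the Riesz projection onto the generalized kernel of $op(a)$. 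Since $op(a)$ is elliptic in the Shubin calculus, its generalized kernel is a finite-dimensional subspace of $\mathcal{S}(\mathbb{R}^{n})^{\oplus q}$, hence the Schwartz kernel of this projection is in $\mathcal{S}(\mathbb{R}^{2n},\mathcal{B}(\mathbb{C}^{q}))$ and $\Pi_{0}(a)\in\Gamma^{-\infty}$. The identity $op(a_{\theta}^{0})=op(a^{-1})op(a)$, when $a^{-1}$ is interpreted as the symbol of the parametrix multiplied by $I-op(\Pi_{0}(a))$ on the appropriate invariant subspace, then follows from the symbolic identity $a^{-\sharp}\sharp a=1-\Pi_{0}(a)$ modulo smoothing. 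Finally, the sectorial projection $\Pi_{\theta,\theta'}(op(a))$ is manifestly idempotent as the Riesz projection associated with the spectral set $\sigma(op(a))\setminus\Lambda_{\theta,\theta'}$, and Proposition \ref{pro:integral e simbolo} together with the composition with $op(a)$ shows that its symbol belongs to $\Gamma_{cl}^{0}(\mathbb{R}^{n},\mathcal{B}(\mathbb{C}^{q}))$.
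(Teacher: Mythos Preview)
Your proposal is correct and follows essentially the same approach as the paper, which does not give a detailed proof but simply refers to the standard contour-integral arguments of Seeley, Shubin, and Ponge. Your outline---additivity via the resolvent identity and a pair of slightly shifted contours, propagation of the relation $op(a)\,op(a_{\theta}^{w})=op(a_{\theta}^{w+1})$, identification of $op(a_{\theta}^{0})$ with $I$ minus the Riesz projection onto the generalized null space, and the smoothing property of that projection from finite-dimensionality of $\ker op(a)\subset\mathcal{S}(\mathbb{R}^{n})^{\oplus q}$---is exactly the classical route those references take, transported to the Shubin calculus via Proposition~\ref{pro:integral e simbolo}.
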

Using our estimates for the symbol of $a_{\theta}^{z}$ we can prove
that $a_{\theta}^{z}$ is a holomorphic family of pseudodifferential
operators. The following definition can be found in a similar version
in Kumano-go \cite{Kumanogo} and in Maniccia, Schrohe and Seiler
\cite{Schrohecomplex}.
\begin{defn}
Let $\Omega\subset\mathbb{C}$ be an open set. We say that $\{b^{z}\in\Gamma_{cl}^{\mathbb{C}}(\mathbb{R}^{n},\mathcal{B}(\mathbb{C}^{q}));\, z\in\Omega\}$
is a holomorphic family of classical pseudodifferential operators
when the following conditions are satisfied:\end{defn}
\begin{itemize}
\item There is an analytic function $\tau:\Omega\to\mathbb{C}$ such that
$b^{z}\in\Gamma_{cl}^{\tau(z)}(\mathbb{R}^{n},\mathcal{B}(\mathbb{C}^{q}))$.
\item If $b^{z}$ has the asymptotic expansion $b^{z}\sim\sum_{j}b_{(\tau(z)-j)}^{z}$,
then \[
(x,\xi,z)\mapsto b_{(\tau(z)-j)}^{z}(x,\xi)\in C^{\infty}\left(\left(\mathbb{R}^{n}\times\mathbb{R}^{n}\right)\backslash\{0\}\times\mathbb{C},\mathcal{B}(\mathbb{C}^{q})\right),\]
and it is analytic in the $z$ variable for each $(x,\xi)\in \mathbb{R}^n\times \mathbb{R}^n\backslash \{0\}$.
\item For any $z_{0}\in\Omega$, $N\in\mathbb{N}_{0}$ and $\epsilon>0$,
there is a neighborhood of $z_{0}$, denoted by $V_{0}\subset\Omega$,
such that $Re(\tau(z))<Re(\tau(z_{0}))+\epsilon$ for all $z\in V_{0}$.
Moreover for all zero excision function $\chi\in C^{\infty}(\mathbb{R}^{n}\times\mathbb{R}^{n})$,
the map $z\in V_{0}\mapsto b^{z}-\sum_{j=0}^{N-1}\chi b_{(\tau(z)-j)}^{z}\in\Gamma^{\tau(z_{0})-N+\epsilon}(\mathbb{R}^{n},\mathcal{B}(\mathbb{C}^{q}))$
is holomorphic.
\end{itemize}
If $\{b^{z}\in\Gamma_{cl}^{\mathbb{C}}(\mathbb{R}^{n},\mathcal{B}(\mathbb{C}^{q}));\, z\in\Omega\}$
and $\{c^{z}\in\Gamma_{cl}^{\mathbb{C}}(\mathbb{R}^{n},\mathcal{B}(\mathbb{C}^{q}));\, z\in\Omega\}$
are two holomorphic family of classical pseudodifferential operators,
then the above definition implies that its composition $\{b^{z}\sharp c^{z}\in\Gamma_{cl}^{\mathbb{C}}(\mathbb{R}^{n},\mathcal{B}(\mathbb{C}^{q}));\, z\in\Omega\}$
is also such a family.

From the definition of holomorphic families and of $a_{\theta}^{z}$, one can be prove the following proposition.
\begin{prop}
The complex powers of a classical pseudodifferential operator form
a holomorphic family of pseudodifferential operators $\{a_{\theta}^{z}\in\Gamma_{cl}^{\mathbb{C}}(\mathbb{R}^{n},\mathcal{B}(\mathbb{C}^{q}));\, z\in\mathbb{C}\}$,
where $\tau(z)=mz$.\end{prop}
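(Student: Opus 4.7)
The plan is to verify the three bullet conditions of the definition of a holomorphic family directly from the contour-integral formula for $a_\theta^z$ when $Re(z)<0$, and then propagate to arbitrary $z\in\mathbb{C}$ via the identity $a_\theta^z = a^{\sharp k}\sharp a_\theta^{z-k}$, using the remark just before this proposition that the $\sharp$-composition of two holomorphic families is again a holomorphic family. For $Re(z)<0$, Proposition \ref{pro:integral e simbolo} already yields $a_\theta^z\in\Gamma_{cl}^{mz}(\mathbb{R}^n,\mathcal{B}(\mathbb{C}^q))$, so the first bullet holds with the analytic function $\tau(z)=mz$; the composition representation preserves this order, so the first bullet holds on all of $\mathbb{C}$.

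To establish the second bullet, I would identify the candidate homogeneous components as
\[
(a_\theta^z)_{(mz-j)}(x,\xi)=\frac{1}{2\pi i}\int_{\Gamma_\theta}\lambda_\theta^z\, b_{-m-j}(x,\xi,\lambda)\,d\lambda,
\]
where $b_{-m-j}$ are the quasi-homogeneous terms of the parametrix of $\lambda-a$ recalled before Proposition \ref{pro:integral e simbolo}. The correct homogeneity degree $mz-j$ in $(x,\xi)$ follows from the substitution $\mu=t^{-m}\lambda$ combined with the quasi-homogeneity $b_{-m-j}(tx,t\xi,t^m\mu)=t^{-m-j}b_{-m-j}(x,\xi,\mu)$ and a contour deformation back to $\Gamma_\theta$, permitted by $\Lambda$-ellipticity (which keeps the spectrum of $a_{(m)}(x,\xi)$ off the annular region between $\Gamma_\theta$ and its rescaled version, provided $\epsilon$ is chosen small enough). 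Joint smoothness on $(\mathbb{R}^{2n}\backslash\{0\})\times\mathbb{C}$ and analyticity in $z$ follow from differentiation under the integral sign, justified by the bound $|\lambda_\theta^z|\le e^{2\pi|Im(z)|}|\lambda|^{Re(z)}$ together with the decay of $b_{-m-j}$ in $\lambda$.

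For the third bullet I would write the remainder as
\[
r_N^z(x,\xi)=\frac{1}{2\pi i}\int_{\Gamma_\theta}\lambda_\theta^z\Bigl(b(x,\xi,\lambda)-\sum_{j=0}^{N-1}\chi(x,\xi)b_{-m-j}(x,\xi,\lambda)\Bigr)d\lambda,
\]
apply the parametrix remainder estimate stated before Definition \ref{def:complexpowers}, split the contour at $|\lambda|=[(x,\xi)]^m$, and integrate in $\lambda$ to obtain seminorm bounds of the form $\|\partial_\xi^\alpha\partial_x^\beta r_N^z(x,\xi)\|_{\mathcal{B}(\mathbb{C}^q)}\le C(z)[(x,\xi)]^{mRe(z)-N-|\alpha|-|\beta|}$ with $C(z)$ locally bounded in $z\in\mathbb{C}$. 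Given $z_0\in\mathbb{C}$ and $\epsilon>0$, choosing $V_0$ a neighborhood of $z_0$ with $Re(mz)\le Re(mz_0)+\epsilon$ places $r_N^z$ uniformly in the Fr\'echet space $\Gamma^{mz_0-N+\epsilon}(\mathbb{R}^n,\mathcal{B}(\mathbb{C}^q))$; holomorphy of $z\mapsto r_N^z$ into this space then follows by the same differentiation-under-the-integral argument, now interpreted in the Fr\'echet topology. The main technical hurdle is precisely this last uniformity step: one must ensure that the constants in the parametrix remainder estimates are under sufficient control as $z$ varies over a compact neighborhood, so that holomorphy of the integrand lifts to Fr\'echet-valued holomorphy of the integral.
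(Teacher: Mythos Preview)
Your proposal is correct and is exactly the verification ``from the definition of holomorphic families and of $a_\theta^z$'' that the paper invokes in its one-line justification; the paper gives no further argument. One small point worth noting: your remainder $r_N^z$ is written against the parametrix $b(\lambda)$ rather than the true symbolic inverse $(a-\lambda)^{-\sharp}$, but the paper's stated bound $[\lambda]^2\tilde q\bigl((\lambda-a)^{-\sharp}-b(\lambda)\bigr)$ bounded in $\Gamma^{-\infty}$ absorbs the difference into a $\Gamma^{-\infty}$-valued holomorphic correction, so the argument goes through unchanged.
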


\section{The traces of Kontsevich-Vishik and Wodzicki. The $\zeta$ function.}

In this section we study and define two linear functionals in the
algebras of classical pseudodifferential operators: The Wodzicki trace
and Kontsevich-Vishik functional for Shubin symbols. We remember that
a trace in an algebra $\mathcal{A}$ is a linear functional $\tau:\mathcal{A}\to\mathbb{C}$
that is zero on each commutator, $\tau(xy-yx)=0$ for all $x,y\in\mathcal{A}$.

The Wodzicki trace was first defined on compact manifolds by Wodzicki
in 1984 \cite{Wodzicki2}. In the context of Weyl's algebra it was
also independently discovered by Guillemin in 1985 \cite{Guillemin}.
For Shubin's operators, it was
defined -actually for a bigger class of operators- by Boggiatto and Nicola \cite{Boggiatto}, see also Section 5.1 of \cite{NicolaRodino}.
\begin{thm}
\label{thm:wodzicki trace} There exists a unique trace on the algebra $\Gamma_{cl}^{\mathbb{Z}}(\mathbb{R}^{n},\mathcal{B}(\mathbb{C}^{q}))/\Gamma^{-\infty}(\mathbb{R}^{n},\mathcal{B}(\mathbb{C}^{q}))$,
where $\Gamma_{cl}^{\mathbb{Z}}(\mathbb{R}^{n},\mathcal{B}(\mathbb{C}^{q}))=\cup_{m\in\mathbb{Z}}\Gamma_{cl}^{m}(\mathbb{R}^{n},\mathcal{B}(\mathbb{C}^{q}))$,
called the Wodzicki trace for Shubin operators. It is given by\[
\mbox{Res}(a)=\frac{1}{(2\pi)^n}\int_{S^{2n-1}}Tr(a_{(-2n)}(x,\xi))dxd\xi,\]
where $a_{(-2n)}$ is the $-2n$ term of the asymptotic expansion
of $a$ and $Tr$ denotes the usual trace on $\mathcal{B}(\mathbb{C}^{q})$.
\end{thm}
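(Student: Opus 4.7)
The plan is to establish existence and uniqueness (up to a normalizing scalar) of the trace separately.

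For \textbf{existence}, the formula $\mbox{Res}(a)=\frac{1}{(2\pi)^n}\int_{S^{2n-1}}Tr(a_{(-2n)})\,dxd\xi$ descends to the quotient by $\Gamma^{-\infty}$ trivially, since regularizing symbols have empty asymptotic expansion. The substantive content is vanishing on commutators $a\sharp b-b\sharp a$. I would isolate the $(-2n)$-homogeneous component of the product using the asymptotic Leibniz formula for $\sharp$; after cancellation, it is a finite linear combination of generalized Poisson-bracket expressions in homogeneous pieces of $a$ and $b$. Each such expression can be rewritten as a divergence $\sum_i(\partial_{x_i}g_i+\partial_{\xi_i}h_i)$ of smooth functions homogeneous of degree $-2n+1$. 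Passing to polar coordinates and using homogeneity, a direct computation shows $\int_{S^{2n-1}}\partial_{z_i}g\,dS=0$ for every such homogeneous $g$, yielding the claimed vanishing.

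For \textbf{uniqueness}, let $\tau$ be any trace on $\Gamma_{cl}^{\mathbb{Z}}/\Gamma^{-\infty}$. The central tool is that in the Shubin calculus the operators $x_i$ and $D_{x_i}$ have symbols in $\Gamma^1_{cl}$, and their commutators with any $op(a)$ produce the \emph{exact} derivative identities
\[ [D_{x_i},op(a)]=-i\,op(\partial_{x_i}a),\qquad [x_i,op(a)]=i\,op(\partial_{\xi_i}a). \]
Combined with the Euler identity $(2n+d)f=\sum_i\partial_{x_i}(x_if)+\sum_i\partial_{\xi_i}(\xi_if)$, valid for any $f$ smooth and homogeneous of degree $d$ on $\mathbb{R}^{2n}\setminus\{0\}$, this shows that whenever $m\neq-2n$ and $f$ is homogeneous of degree $m$, the operator $op(\chi f)$ belongs to the commutator ideal modulo $\Gamma_{cl}^{m-1}$. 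An asymptotic induction over the homogeneous pieces of a classical symbol then gives that $\tau(a)$ depends only on the $(-2n)$-homogeneous component $a_{(-2n)}\in C^\infty(S^{2n-1},\mathcal{B}(\mathbb{C}^q))$.

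The final step, and the main technical obstacle, is to show that if $g\in C^\infty(S^{2n-1},\mathcal{B}(\mathbb{C}^q))$ satisfies $\int_{S^{2n-1}}Tr(g)\,dS=0$, then its degree-$(-2n)$ homogeneous extension is a sum of partial derivatives of symbols homogeneous of degree $-2n+1$. The cleanest route is via spherical harmonics: decomposing entrywise and using the explicit action $\Delta(r^{-2n+2}Y_\ell)=-\ell(\ell+2n-2)r^{-2n}Y_\ell$, one solves $\Delta u=g$ with $u$ homogeneous of degree $-2n+2$ as long as the $\ell=0$ (spherical-average) component of $g$ vanishes, whereupon $g=\sum_i\partial_{z_i}(\partial_{z_i}u)$. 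Combined with the commutator identities this shows $op(\chi g)$ is a sum of commutators modulo smoothing, so $\tau$ factors through $a\mapsto\int_{S^{2n-1}}Tr(a_{(-2n)})\,dS$ up to a scalar. The factor $(2\pi)^{-n}$ is then fixed by evaluating both sides on a convenient test symbol, for instance the symbol of a rank-one spectral projection of the harmonic oscillator, on which $\tau$ must agree with the classical operator trace on $L^2(\mathbb{R}^n)$.
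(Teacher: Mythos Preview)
The paper does not actually prove this theorem: it is stated as a known result, with references to Boggiatto--Nicola and to Section~5.1 of Nicola--Rodino, and the only further comment is that uniqueness is ``a purely algebraic result.'' So there is no in-paper proof to compare against; your sketch is in fact the standard argument found in those references, and your treatment of existence and of uniqueness \emph{up to a scalar} is correct in outline.

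There is, however, a genuine error in your final normalization step. A rank-one spectral projection of the harmonic oscillator is a smoothing operator: its symbol lies in $\Gamma^{-\infty}(\mathbb{R}^{n},\mathcal{B}(\mathbb{C}^{q}))$, so it represents the zero class in the quotient $\Gamma_{cl}^{\mathbb{Z}}/\Gamma^{-\infty}$ and carries no information about $\tau$. More fundamentally, the Wodzicki residue is \emph{not} an extension of the $L^{2}$ operator trace; it vanishes identically on all symbols of order $<-2n$ (in particular on all of $\Gamma^{-\infty}$), whereas the operator trace does not. There is no intrinsic normalization making the trace literally unique: what the cited references prove, and what your argument actually yields, is uniqueness up to a multiplicative constant. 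The factor $(2\pi)^{-n}$ in the stated formula is a convention (chosen so that $\mathrm{Res}$ matches the residue of the $\zeta$-function, as in Proposition~\ref{pro:residuo de TR(q sharp a -z)}), not something forced by the trace property alone. You should therefore drop the last sentence and state the conclusion as uniqueness up to scalar.
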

The proof of the uniqueness does not require any assumptions about
the topology of the algebra. It is a purely algebraic result. Nevertheless
it is clear that $\mbox{Res}|_{\Gamma_{cl}^{m}(\mathbb{R}^{n},\mathcal{B}(\mathbb{C}^{q}))/\Gamma^{-\infty}(\mathbb{R}^{n},\mathcal{B}(\mathbb{C}^{q}))}\to\mathbb{C}$
is continuous for any $m\in\mathbb{Z}$, where $\Gamma_{cl}^{m}(\mathbb{R}^{n},\mathcal{B}(\mathbb{C}^{q}))$
has the topology previously defined.

The other interesting linear functional we use is the Kontsevich-Vishik
functional, also called Kontsevich-Vishik trace although it is not
exactly a trace in our sense. It was first defined in \cite{Vishik}
for classical operators on compact manifolds. In order to keep the
notation simpler, we define this functional only for scalar symbols.
For matricial symbols, all we have to do is to insert the usual trace
$Tr$ of $\mathcal{B}(\mathbb{C}^{q})$ before each symbol. 

Let us fix a zero excision function $\chi$. We know that for every $a\in\Gamma_{cl}^{z}(\mathbb{R}^{n})$, there
exists $a_{(z-j)}\in\Gamma^{(z-j)}(\mathbb{R}^{n})$ and $r_{z-p}\in\Gamma_{cl}^{z-p}(\mathbb{R}^{n})$, depending
on $a$, uniquely determined,
such that \[
a-\sum_{j=0}^{p-1}\chi a_{(z-j)}=r_{z-p}.\]

Using this convention we define the function $TR$, which will be
the Kontsevich-Vishik functional for the Shubin class, also called finite-part
integral in the work of Maniccia, Schrohe and Seiler \cite{DeterminantesSG}.
\begin{defn}
We define $TR:\Gamma_{cl}^{z}(\mathbb{R}^{n})\to\mathbb{C}$, for
$z\in\mathbb{C}\backslash\{-2n,-2n+1,-2n+2,...\}$, in the following
way. We choose $p\in\mathbb{N}_{0}$ such that $z-p<-2n$ and define
for $a\in\Gamma_{cl}^{z}(\mathbb{R}^{n})$\foreignlanguage{brazil}{\[
TR(a)=\frac{1}{(2\pi)^{n}}\left\{ \int_{B_{1}(0)}a(x,\xi)dxd\xi-\sum_{j=0}^{p-1}\frac{1}{2n+z-j}\int_{S^{2n-1}}a_{(z-j)}(x,\xi)d\theta+\int_{B_{1}(0)^{c}}r_{z-p}(x,\xi)dxd\xi\right\} ,\]
}
\end{defn}
where $d\theta$ is the volume measure on $S^{2n-1}$.

The idea behind this definition is to {}``integrate'' the function
$a$. The function $TR$ is well defined, in the sense that $TR(a)$
does not depends on the choice of the zero excision function and on $p$. Moreover for each $z\in\mathbb{C}\backslash\{-2n,-2n+1,-2n+2,...\}$,
$TR:\Gamma_{cl}^{z}(\mathbb{R}^{n})\to\mathbb{C}$ is linear and continuous.
If $a\in\Gamma_{cl}^{z}(\mathbb{R}^{n})$ with $Re(z)<-2n$, then
$TR(a)$ coincides with the trace of $op(a)$, in the sense of trace
of a trace class operator acting in the Hilbert space $L^{2}(\mathbb{R}^{n})$.
It is given by $TR(a)=\frac{1}{(2\pi)^{n}}\int a(x,\xi)dxd\xi$.

This functional has the very nice property of producing analytic functions
from holomorphic families. Let us make this more precise.
\begin{prop}
Let $\{b^{z}\in\Gamma_{cl}^{\mathbb{C}}(\mathbb{R}^{n});\, z\in\Omega\}$
be a holomorphic family of classical Shubin pseudodifferential operators.
Then $\Omega\backslash\{z\in\mathbb{C};\tau(z)=-2n+j,\,\mbox{where}\, j\in\mathbb{N}_{0}\}\ni z\mapsto TR(b^{z})$
is a holomorphic function.\end{prop}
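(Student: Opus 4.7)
Fix $z_{0}\in\Omega$ with $\tau(z_{0})\notin\{-2n+j;\,j\in\mathbb{N}_{0}\}$. The plan is to choose a single integer $p$ that works for $TR(b^{z})$ for all $z$ in a small neighborhood $V_{0}$ of $z_{0}$, split $TR(b^{z})$ into the three pieces from the definition, and verify that each piece is holomorphic on $V_{0}$. First, pick $p\in\mathbb{N}_{0}$ with $\mathrm{Re}(\tau(z_{0}))-p<-2n$; by the third bullet of the definition of a holomorphic family, we may shrink $V_{0}$ and choose $\epsilon>0$ so that $\mathrm{Re}(\tau(z_{0}))-p+\epsilon<-2n$ still holds and, simultaneously, $2n+\tau(z)-j\neq 0$ for all $j=0,\ldots,p-1$ and $z\in V_{0}$ (this last condition is possible because $\tau$ is continuous and $\tau(z_{0})$ avoids the forbidden values).

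With this choice, write $TR(b^{z})=\frac{1}{(2\pi)^{n}}\{I(z)-II(z)+III(z)\}$ with
\[
I(z)=\int_{B_{1}(0)}b^{z}(x,\xi)\,dx\,d\xi,\quad II(z)=\sum_{j=0}^{p-1}\frac{1}{2n+\tau(z)-j}\int_{S^{2n-1}}b^{z}_{(\tau(z)-j)}(x,\xi)\,d\theta,
\]
\[
III(z)=\int_{B_{1}(0)^{c}}r^{z}_{\tau(z)-p}(x,\xi)\,dx\,d\xi,
\]
where $r^{z}_{\tau(z)-p}:=b^{z}-\sum_{j=0}^{p-1}\chi b^{z}_{(\tau(z)-j)}$. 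For $I(z)$, the integrand is jointly smooth in $(x,\xi,z)$ by the first two bullets, and $B_{1}(0)$ is compact, so holomorphicity in $z$ follows by differentiation under the integral sign (or Morera plus Fubini on a small triangle in $V_{0}$). For $II(z)$, the prefactor $1/(2n+\tau(z)-j)$ is holomorphic on $V_{0}$ by our choice of $V_{0}$, and $z\mapsto b^{z}_{(\tau(z)-j)}(x,\xi)$ is holomorphic for each fixed $(x,\xi)\in S^{2n-1}$; integration over the compact sphere preserves holomorphicity by the same standard argument.

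The real work is in $III(z)$. By the third bullet, the map $z\mapsto r^{z}_{\tau(z)-p}$ is holomorphic from $V_{0}$ into the Fr\'echet space $\Gamma^{\tau(z_{0})-p+\epsilon}(\mathbb{R}^{n})$. Since every $f\in\Gamma^{\tau(z_{0})-p+\epsilon}(\mathbb{R}^{n})$ satisfies $|f(x,\xi)|\le C[(x,\xi)]^{\mathrm{Re}(\tau(z_{0}))-p+\epsilon}$ and we arranged $\mathrm{Re}(\tau(z_{0}))-p+\epsilon<-2n$, the linear functional $f\mapsto\int_{B_{1}(0)^{c}}f(x,\xi)\,dx\,d\xi$ is continuous on $\Gamma^{\tau(z_{0})-p+\epsilon}(\mathbb{R}^{n})$. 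Composing a continuous linear functional with a holomorphic Banach/Fr\'echet-valued map yields a holomorphic scalar function, so $III$ is holomorphic on $V_{0}$.

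The main obstacle is the last step: ensuring that the integral on $B_{1}(0)^{c}$ can be interpreted as a continuous linear functional on a space into which $z\mapsto r^{z}_{\tau(z)-p}$ is genuinely holomorphic. This is exactly what the third bullet of the definition of a holomorphic family is designed to provide, but it forces the careful coordination of $p$ and $\epsilon$ in the opening step so that $\mathrm{Re}(\tau(z_{0}))-p+\epsilon<-2n$ holds strictly; any slack in that inequality is what gives the integrability on $B_{1}(0)^{c}$ and the continuity of the functional used to transfer holomorphicity from the symbol-valued level to the scalar level. Since $z_{0}$ was arbitrary in the complement of the forbidden discrete set, the result follows.
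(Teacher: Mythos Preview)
Your proof is correct and follows essentially the same approach as the paper: write out the explicit three-term formula for $TR(b^{z})$ and check holomorphicity of each piece using the defining properties of a holomorphic family. The paper's own proof is just a one-line sketch stating that the result follows from that formula and the definition, so your version is a faithful and more detailed expansion of the same argument; the only minor adjustment is that the holomorphicity of $I(z)$ is most cleanly justified via the third bullet of the definition with $N=0$ (giving $z\mapsto b^{z}$ holomorphic into $\Gamma^{\tau(z_{0})+\epsilon}$, on which integration over $B_{1}(0)$ is a continuous linear functional) rather than the first two bullets, which concern $\tau$ and the homogeneous components only.
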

\begin{proof}
The proof follows from the definition of holomorphic families of classical
pseudodifferential operators and the explicit expression for $TR(b^{z})$:
\[
TR(b^{z})=\frac{1}{(2\pi)^n}\left\{\int_{|(x,\xi)|\le1}b^{z}(x,\xi)dxd\xi-\sum_{j=0}^{p-1}\frac{1}{2n+\tau(z)-j}\int_{S^{2n-1}}b_{(\tau(z)-j)}^{z}(1,\theta)d\theta+\int_{1\le|(x,\xi)|}r_{\tau(z)-p}^{z}(x,\xi)dxd\xi\right\},\]
where $p$ is such that $\tau(z)-p<-2n$ and $b^z=\sum_{j=0}^{p-1}\chi b^z_{(\tau(z)-j)}+r^p_{\tau(z)-p}$.
\end{proof}
We can relate the two functionals by the following proposition. Similar considerations were also used in \cite{Ubertino}.
\begin{prop}
\label{pro:residuo de TR(q sharp a -z)} Let $\{b^{z}\in\Gamma_{cl}^{\mathbb{C}}(\mathbb{R}^{n});\, z\in\Omega\}$
be a holomorphic family of classical pseudodifferential operators
defined on a neighborhood $\Omega$ of 0. If $b^{0}=b\in\Gamma_{cl}^{N}(\mathbb{R}^{n})$,
where $N\in\mathbb{Z}$ and $\tau(z):=ord(b^{z})=z+N$, then on $z=0$
the function $TR(b^{z})$ has at worst a simple pole and $res_{z=0}TR(b^{z})=-\mbox{Res}(b)$.
In particular we have $res_{z=0}TR(q\sharp a_{\theta}^{-z})=\frac{1}{m}\mbox{Res}(q)$,
for any $q\in\Gamma_{cl}^{\mathbb{Z}}(\mathbb{R}^{n})$ and $a\in\Gamma_{cl}^{m}(\mathbb{R}^{n})$
that satisfies the conditions for the construction of complex powers.\end{prop}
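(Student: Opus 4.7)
The plan is to read off the residue directly from the explicit formula for $TR(b^{z})$ exhibited in the proof of the preceding proposition. Fix $p\in\mathbb{N}_{0}$ large enough that $\tau(z)-p=z+N-p<-2n$ for $z$ in a neighborhood of the origin, and split $TR(b^{z})$ into three pieces accordingly. By the very definition of a holomorphic family of classical Shubin operators, the integral $\int_{|(x,\xi)|\le 1}b^{z}\,dxd\xi$ and the tail $\int_{|(x,\xi)|\ge 1}r_{\tau(z)-p}^{z}(x,\xi)\,dxd\xi$ are holomorphic at $z=0$: the first because $b^{z}$ depends holomorphically on $z$ on the compact set $B_{1}(0)$, the second because $\tau(z)-p<-2n$ uniformly near $0$, so the integrand is dominated by a fixed integrable function.

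The only possible source of a pole is the middle finite sum, whose $j$-th term has the form $-\frac{1}{(2\pi)^{n}}\frac{h_{j}(z)}{z+(2n+N-j)}$, where $h_{j}(z):=\int_{S^{2n-1}}b_{(z+N-j)}^{z}(1,\theta)\,d\theta$ is holomorphic at $0$ (again by the definition of a holomorphic family). The denominator vanishes at $z=0$ precisely when $j=2n+N$. If $N<-2n$, this index does not belong to $\{0,\dots,p-1\}$, so $TR(b^{z})$ is already holomorphic at $0$; and in that case the asymptotic expansion of $b$ has no term of homogeneity $-2n$, whence $\mbox{Res}(b)=0$ and the identity is trivial. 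If $N\ge -2n$, the unique pole-producing term contributes
\begin{equation*}
\mbox{res}_{z=0}TR(b^{z})=-\frac{1}{(2\pi)^{n}}\int_{S^{2n-1}}b_{(-2n)}^{0}(1,\theta)\,d\theta=-\mbox{Res}(b),
\end{equation*}
using $b^{0}=b$ and the definition of the Wodzicki trace.

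For the second assertion I would consider the reparametrized family $d^{w}:=q\sharp a_{\theta}^{w/m}$. By the preceding proposition the complex powers form a holomorphic family, and composition with $\sharp$ preserves holomorphic families, so $\{d^{w}\}$ is holomorphic of order $\tau(w)=N+w$, with $d^{0}=q\sharp a_{\theta}^{0}=q-q\sharp\Pi_{0}(a)$ by Theorem \ref{thm:propriedades complex and projection}. Since $\Pi_{0}(a)\in\Gamma^{-\infty}$, the composition $q\sharp\Pi_{0}(a)$ is regularizing, so $\mbox{Res}(d^{0})=\mbox{Res}(q)$. Applying the first part gives $\mbox{res}_{w=0}TR(d^{w})=-\mbox{Res}(q)$, and the change of variable $w=-mz$ converts a simple pole at $w=0$ with residue $r$ into a simple pole at $z=0$ with residue $-r/m$, yielding $\mbox{res}_{z=0}TR(q\sharp a_{\theta}^{-z})=\frac{1}{m}\mbox{Res}(q)$.

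I expect no deep obstacle: the extraction of the residue is purely algebraic once the formula for $TR$ is in hand, and the reparametrization $w=-mz$ is merely a bookkeeping device needed because the order function of $q\sharp a_{\theta}^{-z}$ is $N-mz$ rather than $z+N$. The only step that requires genuine care is verifying that the two ``safe'' pieces of $TR(b^{z})$ really are holomorphic near $0$, which is exactly what the definition of a holomorphic family provides, combined with the choice of $p$ ensuring that $r_{\tau(z)-p}^{z}$ is uniformly Schwartz-small outside the unit ball for $z$ in a small disc around $0$.
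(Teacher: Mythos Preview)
Your argument is correct and follows essentially the same route as the paper: you isolate the single term $j=2n+N$ in the explicit formula for $TR(b^{z})$ as the only source of a pole, read off the residue as $-\mbox{Res}(b)$, and then reparametrize via $w=-mz$ (the paper uses $q\sharp a_{\theta}^{z/m}$) together with $a_{\theta}^{0}=I-\Pi_{0}(a)$ to deduce the second assertion. Your version is in fact a bit more careful, since you treat the case $N<-2n$ explicitly and justify the holomorphy of the two ``safe'' pieces.
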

\begin{proof}
Using the expression for $TR(b^{z})$, with $\tau(z)=z+ord(b)$ we
see that near 0 we have to consider just the term with $j=2n+ord(b)$.
We then have \[
TR(b^{z})=-\frac{1}{(2\pi)^n z}\int_{S^{2n-1}}b_{(z-2n)}^{z}(x,\xi)d\theta+\mbox{analytic function near 0.}\]

Therefore $res_{z=0}TR(b^{z})=-\mbox{Res}(b^{0})=-\mbox{Res}(b).$

Finally let us consider the family $\{q\sharp a_{\theta}^{\frac{z}{m}},z\in\mathbb{C}\}$.
By theorem \ref{thm:propriedades complex and projection} $q\sharp a_{\theta}^{0}=q-q\sharp\Pi_{0}(a)$
and $ord(q\sharp a_{\theta}^{\frac{z}{m}})=z+ord(q)=z+ord(q-q\sharp\Pi_{0}(a))$.
Therefore the residue of the function $f(z)=TR(q\sharp a_{\theta}^{\frac{z}{m}})$
is $-\mbox{Res}(q-q\sharp\Pi_{0}(a))=-\mbox{Res}(q)$ and $res_{z=0}TR(q\sharp a_{\theta}^{-z})=res_{z=0}f(-mz)=\frac{1}{m}\mbox{Res}(q)$.
\end{proof}
We end this section using the Kontsevich-Vishik functional to define
the $\zeta$ function.
\begin{defn}
Let $a\in\Gamma_{cl}^{m}(\mathbb{R}^{n})$, with $m>0$ be $\Lambda$-elliptic.
Let $\theta$ be an angle whose corresponding ray belongs to $\Lambda$.
Then we can define complex powers of $a$, $a_{\theta}^{z}$, using
$\Gamma_{\theta}$ as the path of integration. The zeta function of
$a$ is defined on $\{z\in\mathbb{C};\, z\ne\frac{2n-j}{m}\,\mbox{for}\, j\in\mathbb{N}_{0}\}$
by \[
\zeta_{\theta}(a,z)=TR(a_{\theta}^{-z}).\]

\end{defn}
Explicitly the $\zeta_{\theta}$ function can be written as\foreignlanguage{brazil}{\begin{equation}
\zeta_{\theta}(a,z)=\frac{1}{(2\pi)^{n}}\left\{ \int_{B_{1}(0)}a_{\theta}^{-z}(x,\xi)dxd\xi-\sum_{j=0}^{p-1}\frac{1}{2n-mz-j}\int_{S^{2n-1}}a_{\theta(-mz-j)}^{-z}(x,\xi)d\theta+\right.\label{eq:zeta}\end{equation}
\[
\left.\int_{B_{1}(0)^{c}}r_{\theta-mz-p}^{-z}(x,\xi)dxd\xi\right\} ,\]
where $-mz-p<-2n$ and \[
a_{\theta}^{-z}(x,\xi)=\sum_{j=0}^{p-1}\chi(x,\xi)a_{\theta(-mz-j)}^{-z}(x,\xi)+r_{\theta-mz-p}^{-z}(x,\xi),\]
where $\chi\in C^{\infty}(\mathbb{R}^{n}\times\mathbb{R}^{n})$ is
a zero excision function.}

Using formula \ref{eq:zeta}, we conclude that the $\zeta_{\theta}$
function of a $\Lambda$-elliptic symbol $a\in\Gamma_{cl}^{m}(\mathbb{R}^{n})$,
with $m>0$, is a meromorphic extension with at worst simple poles
at $\{z=\frac{2n-j}{m}\,\mbox{for}\, j\in\mathbb{N}_{0}\}$
of the analytic function on $Re(z)>\frac{2n}{m}$, given by \[
tr(op(a_{\theta}^{-z}))=\frac{1}{\left(2\pi\right)^{n}}\int a_{\theta}^{-z}(x,\xi)dxd\xi,\]
where $tr$ is the trace of trace class operators in $L^{2}(\mathbb{R}^{n})$.
Moreover the poles are given by $\frac{1}{m}\mbox{Res}(a_{\theta}^{-\frac{2n-j}{m}})$.
\begin{rem}
Suppose that $a\in\Gamma_{cl}^{m}(\mathbb{R}^{n})$ is an elliptic
self-adjoint symbol and $m>0$. Then $A=op(a):\mathcal{Q}^{m}(\mathbb{R}^{n})^{\oplus q}\subset L^{2}(\mathbb{R}^{n})^{\oplus q}\to L^{2}(\mathbb{R}^{n})^{\oplus q}$
is an unbounded self-adjoint operator. Suppose in addition that $A$
is positive and let $\{\lambda_{j};\, j\in\mathbb{N}\}$ be the set
of eigenvalues of $A$. Then for $Re(z)>\frac{2n}{m}$ \[
\zeta_{\theta}(a,z)=\sum_{j=1}^{\infty}\lambda_{j}^{-z}.\]
This follows from Shubin \cite{Shubin}. \end{rem}
\begin{example}
Let us consider the operator, whose symbol is $a(x,\xi)=\frac{x^{2}+\xi^{2}+1}{2}$.
In this case $a\in\Gamma_{cl}^{2}(\mathbb{R})$ is an elliptic symbol
and $op(a):Q^{2}(\mathbb{R})\subset L^{2}(\mathbb{R})\to L^{2}(\mathbb{R})$
is a positive operator, whose eigenvalues are $\lambda_{j}=j$ for
$j\in\mathbb{N}$ (this is a simple consequence of the harmonic oscillator
results, see for instance Theorem 2.2.3 of \cite{NicolaRodino}).

We can evaluate explicitly the zeta function of this operator. In
fact for $Re(z)>\frac{2n}{m}=1$, the zeta function is\[
\zeta_{\theta}(a,z)=\sum_{n=1}^{\infty}\lambda_{n}^{-z}=\sum_{n=1}^{\infty}\frac{1}{n^{z}}.\]

This is the well known Riemann zeta function.
\end{example}

\section{K-Theory.}

In this section we compute the $K$ groups of the classical 0 order
Shubin pseudodifferential operators in order to get some information about traces of
idempotent operators. This approach of K-theory of $C^*$-algebras was also used by Gaarde \cite{Gaarde} to deal with boundary problems.

As in the case of classical pseudodifferential operators on manifolds,
we have seen in Corollary \ref{thm:norma modulo compactos} that for
a symbol $a\in\Gamma_{cl}^{0}(\mathbb{R}^{n})$,
whose principal symbol is $a_{(0)}\in C^{\infty}((\mathbb{R}^{n}\times\mathbb{R}^{n})\backslash\{0\})$,
the following equality holds\[
\inf_{C\in\mathcal{K}(L^{2}(\mathbb{R}^{n}))}\left\Vert op(a)+C\right\Vert _{\mathcal{B}(L^{2}(\mathbb{R}^{n}))}=\sup_{|(x,\xi)|=1}|a_{(0)}(x,\xi)|.\]

It is easily seen that $\Gamma^{(0)}(\mathbb{R}^{n})$ is isomorphic
to $C^{\infty}(S^{2n-1})$. As $C^{\infty}(S^{2n-1})$ is dense in
$C(S^{2n-1})$, we conclude from the above relation that\[
\overline{\Gamma_{cl}^{0}(\mathbb{R}^{n})}/\mathcal{K}\cong C(S^{2n-1}).\]

$\overline{\Gamma_{cl}^{0}(\mathbb{R}^{n})}$ is the closure
of the set of operators $op(a)$ in $\mathcal{B}(L^{2}(\mathbb{R}^{n}))$,
with $a\in\Gamma_{cl}^{0}(\mathbb{R}^{n})$ and $\mathcal{K}=\mathcal{K}(L^{2}(\mathbb{R}^{n}))$.
Using these isomorphisms we can calculate the $K$ groups of these
$C^{*}$-algebras. We refer to \cite{Rordamkteoria} for definitions
and notations.
\begin{lem}
Let $X\ne\emptyset$ be a metric space. Consider the function $f:X\to\mathbb{C}$
given by $f(x)=1$ for all $x\in X$. If $K_{0}(C(X))=\mathbb{Z}$,
then $[f]_{0}$ is a generator of $K_{\text{0}}(C(X))$.\end{lem}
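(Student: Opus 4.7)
The plan is to use a point-evaluation map and the fact that any surjective endomorphism of $\mathbb{Z}$ is automatically an isomorphism.

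First, since $X$ is nonempty, I pick a point $x_{0}\in X$ and consider the unital $*$-homomorphism
\[
\mathrm{ev}_{x_{0}}:C(X)\longrightarrow\mathbb{C},\qquad g\longmapsto g(x_{0}).
\]
By the functoriality of $K_{0}$ this induces a group homomorphism
\[
\mathrm{ev}_{x_{0},*}:K_{0}(C(X))\longrightarrow K_{0}(\mathbb{C})=\mathbb{Z},
\]
and because $\mathrm{ev}_{x_{0}}$ is unital we have $\mathrm{ev}_{x_{0},*}([f]_{0})=[1_{\mathbb{C}}]_{0}=1$, where $1$ denotes the standard generator of $K_{0}(\mathbb{C})\cong\mathbb{Z}$.

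Next, I use the hypothesis $K_{0}(C(X))\cong\mathbb{Z}$. Under this identification, $\mathrm{ev}_{x_{0},*}$ becomes a group homomorphism $\mathbb{Z}\to\mathbb{Z}$ whose image contains $1$, so it is surjective. But any surjective endomorphism of $\mathbb{Z}$ is multiplication by $\pm1$, hence an isomorphism. Therefore $\mathrm{ev}_{x_{0},*}$ is an isomorphism from $K_{0}(C(X))$ onto $\mathbb{Z}$, and since it sends $[f]_{0}$ to a generator of $\mathbb{Z}$, the class $[f]_{0}$ must be a generator of $K_{0}(C(X))$.

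I do not foresee any real obstacle; the only subtlety is recognising that one does not need to exhibit a putative generator of $K_{0}(C(X))$ explicitly and compare it with $[f]_{0}$. The shortcut is exactly the observation that a surjective homomorphism $\mathbb{Z}\to\mathbb{Z}$ is automatically bijective, which reduces the problem to checking that $[f]_{0}$ maps to $1\in\mathbb{Z}=K_{0}(\mathbb{C})$, and this is immediate from the fact that $\mathrm{ev}_{x_{0}}$ preserves the unit.
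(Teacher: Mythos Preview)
Your proof is correct and follows essentially the same approach as the paper: both use the point-evaluation $*$-homomorphism $C(X)\to\mathbb{C}$, apply functoriality of $K_{0}$, observe that $[f]_{0}$ maps to the generator $[1]_{0}$ of $K_{0}(\mathbb{C})=\mathbb{Z}$, and conclude by noting that a surjective homomorphism $\mathbb{Z}\to\mathbb{Z}$ is necessarily bijective.
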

\begin{proof}
Let $p\in X$ an arbitrary point and $\hat{p}:C(X)\to\mathbb{C}$
be the $*$-homomorphism given by $\hat{p}(f)=f(p)$. As $K_{0}$
is a functor, we can construct the group homomorphism $K_{0}(\hat{p}):K_{0}(C(X))\to K_{0}(\mathbb{C})$.
As $K_{0}(\mathbb{C})=\mathbb{Z}$, and $K_{0}(\hat{p})(f)=[f(p)]_{0}=[1]_{0}$,
we conclude that $K_{0}(\hat{p})$ is a homomorphism from $\mathbb{Z}$
to $\mathbb{Z}$ that is surjective. Therefore it must be also bijective.
As $[1]_{0}$ is the generator of $K_{0}(\mathbb{C})$, $[f]_{0}$
should also be the generator $K_{0}(C(X))$.
\end{proof}
Using this lemma we obtain the following simple but important Theorem.
\begin{thm}
The K groups of the $C^{*}$-algebras $\overline{\Gamma_{cl}^{0}(\mathbb{R}^{n})}$
and $\overline{\Gamma_{cl}^{0}(\mathbb{R}^{n})}/\mathcal{K}$ are

\[
K_{i}\left(\overline{\Gamma_{cl}^{0}(\mathbb{R}^{n})}/\mathcal{K}\right)=\mathbb{Z},\, i=0,1.\]
\[
K_{i}\left(\overline{\Gamma_{cl}^{0}(\mathbb{R}^{n})}\right)=\left\{ \begin{array}{c}
\mathbb{Z},\, i=0\\
0,\, i=1\end{array}\right..\]

Furthermore the $K_{0}$-class of the identity operator $I=op(1)$
is a generator of \textup{$K_{0}\left(\overline{\Gamma_{cl}^{0}(\mathbb{R}^{n})}\right)$}.\end{thm}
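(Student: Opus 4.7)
The plan is to apply the six-term exact sequence in topological $K$-theory to the extension of $C^{*}$-algebras
\[
0\to\mathcal{K}(L^{2}(\mathbb{R}^{n}))\xrightarrow{\overline{i}}\overline{\Gamma_{cl}^{0}(\mathbb{R}^{n})}\xrightarrow{\overline{s}}C(S^{2n-1})\to 0
\]
established in Corollary~\ref{thm:norma modulo compactos}. First I identify the groups at the endpoints: $K_{0}(\mathcal{K})=\mathbb{Z}$ (generated by any rank-one projection) and $K_{1}(\mathcal{K})=0$; and since $S^{2n-1}$ is an odd-dimensional sphere, Bott periodicity applied inductively via the suspensions $S^{k}=\Sigma S^{k-1}$ yields $K_{0}(C(S^{2n-1}))=\mathbb{Z}$ and $K_{1}(C(S^{2n-1}))=\mathbb{Z}$, with $[1]_{0}$ generating the former by the preceding lemma.

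Substituting these values into the six-term sequence produces
\[
0\to K_{1}\!\left(\overline{\Gamma_{cl}^{0}(\mathbb{R}^{n})}\right)\to\mathbb{Z}\xrightarrow{\partial}\mathbb{Z}\xrightarrow{K_{0}(\overline{i})}K_{0}\!\left(\overline{\Gamma_{cl}^{0}(\mathbb{R}^{n})}\right)\xrightarrow{K_{0}(\overline{s})}\mathbb{Z}\to 0,
\]
where $\partial$ denotes the index connecting map. The entire computation now reduces to showing that $\partial$ is surjective. Once this is granted, $\partial\colon\mathbb{Z}\to\mathbb{Z}$ must be an isomorphism, which by exactness forces $K_{1}\!\left(\overline{\Gamma_{cl}^{0}(\mathbb{R}^{n})}\right)=\ker\partial=0$, $K_{0}(\overline{i})=0$, and $K_{0}(\overline{s})$ to be an isomorphism onto $\mathbb{Z}$. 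Since $K_{0}(\overline{s})[I]_{0}=[\overline{s}(I)]_{0}=[1]_{0}$ is the generator, $[I]_{0}$ must then generate $K_{0}\!\left(\overline{\Gamma_{cl}^{0}(\mathbb{R}^{n})}\right)$.

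By stability of $K$-theory, $K_{i}\!\left(\overline{\Gamma_{cl}^{0}(\mathbb{R}^{n})}\right)=K_{i}\!\left(\overline{\Gamma_{cl}^{0}(\mathbb{R}^{n},\mathcal{B}(\mathbb{C}^{q}))}\right)$ for every $q$, so surjectivity of $\partial$ is equivalent to exhibiting an elliptic symbol $a\in\Gamma_{cl}^{0}(\mathbb{R}^{n},\mathcal{B}(\mathbb{C}^{q}))$ whose principal symbol on $S^{2n-1}$ generates $K_{1}(C(S^{2n-1}))$ and whose operator $op(a)$ has Fredholm index $\pm 1$. In dimension $n=1$ a concrete scalar example suffices: the partial isometry $w=a^{*}(N+I)^{-1/2}$ in the polar decomposition of the harmonic-oscillator creation operator $a^{*}=(x-\partial_{x})/\sqrt{2}$ belongs to $\Gamma_{cl}^{0}(\mathbb{R})$ (since $(N+I)^{-1/2}$ is a classical Shubin operator of order $-1$ produced by the complex-powers construction of Section~4); its principal symbol on $S^{1}$ is $(x-i\xi)/|(x,\xi)|$, which represents a generator of $K_{1}(C(S^{1}))$; and the relations $w^{*}w=I$ together with $I-ww^{*}$ being the rank-one projection onto the harmonic-oscillator ground state give $\mathrm{ind}(w)=-1$.

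The main obstacle is producing such a Fredholm operator of index $\pm 1$ when $n\ge 2$: since $S^{2n-1}$ is then simply connected, every nowhere-vanishing scalar principal symbol $S^{2n-1}\to\mathbb{C}^{\times}$ is null-homotopic, so no scalar elliptic Shubin symbol can represent the generator of $K_{1}(C(S^{2n-1}))$. One must instead construct a matrix-valued elliptic symbol of Bott or Dirac type whose Fredholm index equals $\pm 1$; the index computation is the Atiyah--Singer--Fedosov index theorem for global symbols, the Shubin-calculus counterpart of the Atiyah--Singer theorem for elliptic pseudodifferential operators on compact manifolds. Modulo this input, the rest of the argument is a routine diagram chase in the six-term exact sequence.
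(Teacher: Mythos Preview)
Your proposal is correct and follows essentially the same route as the paper: apply the six-term exact sequence to the extension $0\to\mathcal{K}\to\overline{\Gamma_{cl}^{0}(\mathbb{R}^{n})}\to C(S^{2n-1})\to 0$, identify the connecting map $\partial$ with the Fredholm index, and invoke the existence of a matrix-valued elliptic Shubin operator of index $\pm 1$ (the paper cites the Fedosov formula \cite{BVFedosov}, \cite[Theorem 19.3.1]{Hormander3} for this) to conclude that $\partial$ is an isomorphism. Your explicit $n=1$ construction via the creation operator and your discussion of why scalar symbols cannot work for $n\ge 2$ are additional detail not present in the paper, but the logical skeleton and the external input used are the same.
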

\begin{proof}
We have $K_{i}\left(\overline{\Gamma_{cl}^{0}(\mathbb{R}^{n})}/\mathcal{K}\right)\cong K_{i}\left(C(S^{2n-1})\right)\cong\mathbb{Z}$
for $i=0$ and $1$ \cite{Rordamkteoria}. For $\overline{\Gamma_{cl}^{0}(\mathbb{R}^{n})}$
we use the canonical cyclic exact sequence associated to the short
exact sequence \ref{eq:sequencia exata}:\[
\begin{array}{ccccc}
K_{0}(\mathcal{K}) & \overset{K_{0}(\overline{i})}{\longrightarrow} & K_{0}(\overline{\Gamma_{cl}^{0}(\mathbb{R}^{n})}) & \overset{K_{0}(\overline{s})}{\longrightarrow} & K_{0}(C(S^{2n-1}))\\
\begin{array}{cc}
\delta_{1} & \uparrow\end{array} &  &  &  & \begin{array}{cc}
\downarrow & \delta_{0}\end{array}\\
K_{1}(C(S^{2n-1})) & \overset{K_{1}(\overline{s})}{\longleftarrow} & K_{1}(\overline{\Gamma_{cl}^{0}(\mathbb{R}^{n})}) & \overset{K_{1}(\overline{i})}{\longleftarrow} & K_{1}(\mathcal{K})\end{array}.\]

We know that $K_{0}(\mathcal{K})=\mathbb{Z}$, $K_{0}(C(S^{2n-1}))=\mathbb{Z}$,
$K_{1}(\mathcal{K})=0$, $K_{1}(C(S^{2n-1}))=\mathbb{Z}$, see \cite{Rordamkteoria}.
This exact sequence then becomes\[
\begin{array}{ccccc}
\mathbb{Z} & \overset{K_{0}(\overline{i})}{\longrightarrow} & K_{0}(\overline{\Gamma_{cl}^{0}(\mathbb{R}^{n})}) & \overset{K_{0}(\overline{s})}{\longrightarrow} & \mathbb{Z}\\
\begin{array}{cc}
\delta_{1} & \uparrow\end{array} &  &  &  & \begin{array}{cc}
\downarrow & \delta_{0}\end{array}\\
\mathbb{Z} & \overset{K_{1}(\overline{s})}{\longleftarrow} & K_{1}(\overline{\Gamma_{cl}^{0}(\mathbb{R}^{n})}) & \overset{K_{1}(\overline{i})}{\longleftarrow} & 0\end{array}.\]

The map $\delta_{1}:\mathbb{Z}\to\mathbb{Z}$ gives the Fredholm index
\cite[Proposition 9.4.2]{Rordamkteoria}. As we know from the Fedosov
formula \cite{BVFedosov}, see also \cite[Theorem 19.3.1]{Hormander3}, there is a matrix-valued Fredholm
operator of index 1. Hence $\delta_{1}$ is an isomorphism. We conclude
that $K_{1}(\overline{\Gamma_{cl}^{0}(\mathbb{R}^{n})})=0$ and $K_{0}(\overline{\Gamma_{cl}^{0}(\mathbb{R}^{n})})=\mathbb{Z}$.
The function $f(x)=1$ is a generator of $K_{0}(C(S^{2n-1}))$. Therefore
the $K_0$ class of the identity operator $I=op(1)$ is a generator of $K_{0}(\overline{\Gamma_{cl}^{0}(\mathbb{R}^{n})})$. 
\end{proof}
From the stability of the $K$-Theory, we immediately get:
\begin{cor}
Let $\overline{\Gamma_{cl}^{0}(\mathbb{R}^{n},\mathcal{B}(\mathbb{C}^{q}))}$
the closure of the algebra generated by the operators of the form
$op(a)$ in $\mathcal{B}(L^{2}(\mathbb{R}^{n})^{\oplus q})$, with
$a\in\Gamma_{cl}^{0}(\mathbb{R}^{n},\mathcal{B}(\mathbb{C}^{q}))$.
Then 

\[
K_{i}\left(\overline{\Gamma_{cl}^{0}(\mathbb{R}^{n},\mathcal{B}(\mathbb{C}^{q}))}/\mathcal{K}(L^{2}(\mathbb{R}^{n})^{\oplus q})\right)=\mathbb{Z},\, i=0,1,\]
\[
K_{i}\left(\overline{\Gamma_{cl}^{0}(\mathbb{R}^{n},\mathcal{B}(\mathbb{C}^{q}))}\right)=\left\{ \begin{array}{c}
\mathbb{Z},\, i=0\\
0,\, i=1\end{array}\right..\]

Moreover, the $K_{0}$-class of the identity operator $I=op(1)$, where $1$ indicates the unity matrix, on
$L^{2}(\mathbb{R}^{n})^{\oplus q}$ is a generator of $K_{0}\left(\overline{\Gamma_{cl}^{0}(\mathbb{R}^{n},\mathcal{B}(\mathbb{C}^{q}))}\right)$.
\end{cor}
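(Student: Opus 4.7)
The plan is to derive the matrix-valued statement directly from the scalar theorem proved just before the corollary by invoking the Morita/stability invariance of $K$-theory, rather than running the six-term exact sequence again.

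The first step is to identify both algebras in the corollary as matrix algebras over their scalar counterparts. Using $L^{2}(\mathbb{R}^{n})^{\oplus q}\cong L^{2}(\mathbb{R}^{n})\otimes\mathbb{C}^{q}$, the assignment that sends a matrix-valued symbol $a=(a_{ij})$ to the operator matrix $(op(a_{ij}))$ yields a $*$-isomorphism of Fr\'echet $*$-algebras $\Gamma_{cl}^{0}(\mathbb{R}^{n},\mathcal{B}(\mathbb{C}^{q}))\cong M_{q}(\Gamma_{cl}^{0}(\mathbb{R}^{n}))$ (after identifying entrywise symbols with scalar Shubin symbols), and this extends to a $C^{*}$-isomorphism $\overline{\Gamma_{cl}^{0}(\mathbb{R}^{n},\mathcal{B}(\mathbb{C}^{q}))}\cong M_{q}\bigl(\overline{\Gamma_{cl}^{0}(\mathbb{R}^{n})}\bigr)$ after passing to the norm closure. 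The standard identification $\mathcal{K}(L^{2}(\mathbb{R}^{n})^{\oplus q})\cong M_{q}(\mathcal{K}(L^{2}(\mathbb{R}^{n})))$ is well known, and the matrix-valued analogue of the exact sequence \eqref{eq:sequencia exata} is obtained by applying the functor $M_{q}(\cdot)$ to the scalar sequence.

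The second step is to invoke stability: for any $C^{*}$-algebra $A$, the top-left corner embedding induces an isomorphism $K_{i}(A)\cong K_{i}(M_{q}(A))$, $i=0,1$. Applying this to $A=\overline{\Gamma_{cl}^{0}(\mathbb{R}^{n})}$ and to $A=C(S^{2n-1})$ transfers the scalar computation of the preceding theorem verbatim, yielding $K_{0}=K_{1}=\mathbb{Z}$ for the quotient and $K_{0}=\mathbb{Z}$, $K_{1}=0$ for the algebra itself, as claimed.

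For the final assertion on the generator, I would argue through the six-term exact sequence associated with the matrix-valued analogue of \eqref{eq:sequencia exata}, exactly as in the scalar proof. The boundary map $\delta_{1}$ is the Fredholm index, and the Fedosov formula again produces a matrix-valued elliptic operator of index $1$ (concretely, place the scalar Fedosov example in the top-left block of a $q\times q$ diagonal operator and identities on the remaining diagonal blocks). Hence $\delta_{1}$ is surjective, which forces $K_{1}=0$ and makes the symbol map an isomorphism on $K_{0}$. Since $\overline{s}\bigl([I]_{0}\bigr)$ is the class of the constant $q\times q$ identity symbol on $S^{2n-1}$, tracking this class through the stability isomorphism identifies $[I]_{0}$ with a generator of $\mathbb{Z}$.

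The work is essentially bookkeeping: the only non-trivial ingredients are the two structural isomorphisms of step one and the stability theorem of step two, both standard. The main thing to be careful about is compatibility of the matrix-algebra identifications with the symbol map and with the inclusion $\mathcal{K}\hookrightarrow\overline{\Gamma_{cl}^{0}}$, and the correct tracking of the class $[I]_{0}$ under the stability isomorphism when one wants to conclude the generator statement; no analytic difficulty appears beyond what was already handled in the scalar case.
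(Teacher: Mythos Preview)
Your approach via stability/Morita invariance is exactly what the paper does (its entire proof is the one line ``From the stability of the $K$-Theory, we immediately get''), and your first two steps correctly deliver the $K$-group computations.

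There is, however, a genuine gap in your final step. Under the stability isomorphism $K_{0}(M_{q}(A))\cong K_{0}(A)$ induced by the corner embedding, the class of the unit $1_{M_{q}(A)}$ does \emph{not} correspond to a generator: since $1_{M_{q}(A)}=e_{11}+\cdots+e_{qq}$ with each $e_{ii}$ Murray--von~Neumann equivalent to $e_{11}$, one has $[1_{M_{q}(A)}]_{0}=q\cdot[e_{11}]_{0}$, which corresponds to $q\cdot[1_{A}]_{0}$. Taking $A=C(S^{2n-1})$, the constant $q\times q$ identity symbol represents $q$ times a generator of $K_{0}(C(S^{2n-1},M_{q}))\cong\mathbb{Z}$, and hence so does $[I]_{0}$ in $K_{0}\bigl(\overline{\Gamma_{cl}^{0}(\mathbb{R}^{n},\mathcal{B}(\mathbb{C}^{q}))}\bigr)$; your concluding sentence therefore fails for $q>1$, and rerouting through the six-term sequence cannot repair it. (This is in fact a defect of the corollary's statement itself, not only of your argument; concretely, the projection $op(\mathrm{diag}(1,0,\dots,0))$ already represents a generator, and $[I]_{0}$ is $q$ times its class.) For the downstream application to the vanishing of the Wodzicki residue on idempotents this does no harm: one only needs $K_{0}(\mbox{Res})([I]_{0})=0$, and since $[I]_{0}=q\cdot g$ for a generator $g$ while the target $\mathbb{C}$ is torsion-free, $K_{0}(\mbox{Res})(g)=0$ still follows.
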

Our main objective now is to prove that the Wodzicki residue is zero
for any idempotent classical Shubin pseudodifferential operator.
\begin{thm}
\label{thm:wodzicki trace =0000E9 zero}The Wodzicki trace of any
idempotent Shubin operator in $\Gamma_{cl}^{0}(\mathbb{R}^{n},\mathcal{B}(\mathbb{C}^{q}))$
is zero.\end{thm}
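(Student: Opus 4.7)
The plan is to exploit the $K$-theoretic analysis of the previous section. The key observation is that the Wodzicki trace $\mbox{Res}$, being a trace, descends to a group homomorphism on $K_{0}(\mathcal{A})$, where $\mathcal{A}=\Gamma_{cl}^{0}(\mathbb{R}^{n},\mathcal{B}(\mathbb{C}^{q}))$. Since $K_{0}(\mathcal{A})=\mathbb{Z}$ is generated by the class of the identity operator, it suffices to evaluate $\mbox{Res}$ on $I$, which turns out to be zero; hence the induced homomorphism vanishes, proving the theorem.

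In more detail, I would proceed in three steps. First, I would compute $\mbox{Res}(I)$: the symbol of $I=op(1)$ is the constant identity matrix, whose asymptotic expansion reduces to the single term $1+0+0+\cdots$. In particular the $(-2n)$-homogeneous component is identically zero, so $\mbox{Res}(I)=\frac{1}{(2\pi)^{n}}\int_{S^{2n-1}}Tr(0)\,dxd\xi=0$. Second, I would verify that $\mbox{Res}$ descends to a well-defined group homomorphism $\mbox{Res}_{*}:K_{0}(\mathcal{A})\to\mathbb{C}$. This relies on the trace property $\mbox{Res}(u\sharp v)=\mbox{Res}(v\sharp u)$, which persists for matrix-valued symbols, so that if $P=uv$ and $Q=vu$ are Murray--von Neumann equivalent idempotents then $\mbox{Res}(P)=\mbox{Res}(Q)$; combined with additivity under direct sums, this gives the desired descent. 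Third, I would invoke from the previous section that $\mathcal{A}$ is a $\Psi^{*}$-algebra in $\overline{\mathcal{A}}$, so that $K_{0}(\mathcal{A})=K_{0}(\overline{\mathcal{A}})=\mathbb{Z}$, generated by $[I]_{0}$. For any idempotent $P\in\mathcal{A}$ one therefore has $[P]_{0}=n[I]_{0}$ for some $n\in\mathbb{Z}$, and consequently $\mbox{Res}(P)=n\,\mbox{Res}(I)=0$.

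The only delicate point is the descent of $\mbox{Res}$ to $K_{0}$, specifically checking that it respects Murray--von Neumann equivalence of idempotents that may sit in matrix algebras $M_{k}(\mathcal{A})$ of different sizes. This is handled by the standard corner-embedding argument: padding a symbol by zeros does not alter the $(-2n)$-term of its asymptotic expansion, so the Wodzicki trace is preserved under the inclusion $M_{k}(\mathcal{A})\hookrightarrow M_{k+\ell}(\mathcal{A})$, and the trace property then gives invariance within the stabilisation $M_{\infty}(\mathcal{A})$. Everything else is a direct consequence of results already established in the paper.
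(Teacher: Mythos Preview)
Your proposal is correct and follows essentially the same approach as the paper: both use that $\mathcal{A}$ is a $\Psi^{*}$-algebra to identify $K_{0}(\mathcal{A})\cong K_{0}(\overline{\mathcal{A}})\cong\mathbb{Z}$ with generator $[I]_{0}$, observe that $\mbox{Res}(I)=0$ because the $(-2n)$-term of the constant symbol $1$ vanishes, and conclude via the induced homomorphism $K_{0}(\mbox{Res})$. Your treatment of the descent to $K_{0}$ (Murray--von Neumann invariance, stability under corner embeddings) is slightly more explicit than the paper's, which simply invokes the standard construction of $K_{0}(\tau)$ for a trace $\tau$.
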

\begin{proof}
Let us first identify $\Gamma_{cl}^{0}(\mathbb{R}^{n},\mathcal{B}(\mathbb{C}^{q}))$
with the algebra $\mathcal{A}$ of operators $op(a)$ in $\mathcal{B}(L^{2}(\mathbb{R}^{n})^{\oplus q})$
with symbols in $\Gamma_{cl}^{0}(\mathbb{R}^{n},\mathcal{B}(\mathbb{C}^{q}))$.
We have seen in section 2.4 that this algebra is a Fréchet algebra
and it is closed under the holomorphic functional calculus. Therefore,
the inclusion $i:\Gamma_{cl}^{0}(\mathbb{R}^{n},\mathcal{B}(\mathbb{C}^{q}))\to\overline{\Gamma_{cl}^{0}(\mathbb{R}^{n},\mathcal{B}(\mathbb{C}^{q}))}$
provides an isomorphism \[
K_{0}(i):\, K_{0}\left(\Gamma_{cl}^{0}(\mathbb{R}^{n},\mathcal{B}(\mathbb{C}^{q}))\right)\to K_{0}\left(\overline{\Gamma_{cl}^{0}(\mathbb{R}^{n},\mathcal{B}(\mathbb{C}^{q}))}\right)\]
(in order to see that the inclusion is an isomorphism of the K groups,
see, for instance, \cite[Proposition 3, page 292]{Connes} and the references mentioned there). Hence $[I]_{0}=[op(1)]_{0}$
is a generator of the group $K_{0}\left(\Gamma_{cl}^{0}(\mathbb{R}^{n},\mathcal{B}(\mathbb{C}^{q}))\right)$
and it is isomorphic to $\mathbb{Z}$. We know that for each trace
$\tau$ in $\Gamma_{cl}^{0}(\mathbb{R}^{n},\mathcal{B}(\mathbb{C}^{q})))$, there exists a unique
homomorphism $K_{0}(\tau):\, K_{0}\left(\Gamma_{cl}^{0}(\mathbb{R}^{n},\mathcal{B}(\mathbb{C}^{q}))\right)\to\mathbb{C}$
such that \[
K_{0}(\tau)([p]_{0})=\tau(p).\]
We know that $K_{0}(\mbox{Res})([I]_{0})=[\mbox{Res}(1)]_{0}=0$,
because obviously all the terms of the asymptotic expansion of 1 are
0, except for the homogeneous term of order 0. The trace of Wodzicki
depends only on the term $-2n$ of the asymptotic expansion, as it
can be seen in the Theorem \ref{thm:wodzicki trace}. As $[I]_{0}$
is the generator of $K_{0}\left(\Gamma_{cl}^{0}(\mathbb{R}^{n},\mathcal{B}(\mathbb{C}^{q}))\right)$,
we conclude that $K_{0}(\mbox{Res})$ is 0. Hence Res$(p)=0$ for
any idempotent $p\in\Gamma_{cl}^{0}(\mathbb{R}^{n},\mathcal{B}(\mathbb{C}^{q}))$.
\end{proof}

\section{The regularity of the $\eta$ function.}

We are now in position of proving the regularity of the $\eta$ function.
\begin{defn}
Let $a\in\Gamma^{m}(\mathbb{R}^{n},\mathcal{B}(\mathbb{C}^{q}))$,
$m>0$, be a self-adjoint, elliptic symbol. The $\eta$ function associated
to $a$ is defined as

\[
\eta(op(a),z)=\sum_{\lambda\in\sigma(op(a))\backslash\{0\}}sgn(\lambda)|\lambda|^{-z}=tr(op(a)|op(a)|^{-(z+1)}),\,\,\,\, Re(z)>\frac{2n}{m},\]
where $\sigma(op(a))$ denotes the spectrum of $op(a):Q^{m}(\mathbb{R}^{n})^{\oplus q}\subset L^{2}(\mathbb{R}^{n})^{\oplus q}\to L^{2}(\mathbb{R}^{n})^{\oplus q}$ and $tr$ the trace of trace class operators on $\mathcal{B}(L^2(\mathbb{R}^n))$.

This function can be extended to a meromorphic function in $\mathbb{C}$
with at most simple poles at $s=\frac{2n-j}{m}$, $j\in\mathbb{N}_{0}$, as can be proved using the same arguments we used for the $\zeta$ function\end{defn}
\begin{prop}
Let $a\in\Gamma_{cl}^{m}(\mathbb{R}^{n},\mathcal{B}(\mathbb{C}^{q}))$,
$m>0$, be a self-adjoint, elliptic symbol. Then
for any sector $\Lambda$ contained in the upper half plane, $\{z\in\mathbb{C},\mbox{Im}(z)>0\}$,
or in the lower half plane, $\{z\in\mathbb{C},\mbox{Im}(z)<0\}$,
the symbol $a$ is $\Lambda$-elliptic and satisfies the condition
$(A)$.\end{prop}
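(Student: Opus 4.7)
The plan is to observe that self-adjointness of $a$ forces both the principal symbol $a_{(m)}$ and the unbounded realization $op(a)$ to be self-adjoint, after which $\Lambda$-ellipticity and assumption $(A)$ reduce to the statement that Hermitian operators have real spectrum.

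First I would exploit the fact that in the Shubin calculus the involution $^{*}$ preserves the order and its top-order part is the pointwise Hermitian adjoint of the principal symbol. Hence from $a=a^{*}$ one deduces $a_{(m)}(x,\xi)=a_{(m)}(x,\xi)^{*}$ for every $(x,\xi)\neq 0$, so $a_{(m)}(x,\xi)$ is Hermitian and its spectrum is real. Combined with ellipticity---which was already observed to be equivalent to invertibility of $a_{(m)}(x,\xi)$ for every $(x,\xi)\neq 0$---this yields $\mathrm{spec}(a_{(m)}(x,\xi))\subset\mathbb{R}\setminus\{0\}$.

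Second, a sector $\Lambda_{\theta,\theta'}$ lying in the open upper (resp.\ lower) half plane corresponds to bounding angles $0<\theta\le\theta'<\pi$ (resp.\ $-\pi<\theta\le\theta'<0$), so $\Lambda\cap\mathbb{R}\subseteq\{0\}$. Combining this with the previous step gives $\mathrm{spec}(a_{(m)}(x,\xi))\cap\Lambda=\emptyset$ for every $(x,\xi)\neq 0$, which is exactly $\Lambda$-ellipticity. For the second clause of assumption $(A)$ I would simply invoke the remark already recorded after the definition of the Shubin Sobolev spaces: since $a$ is self-adjoint, elliptic and of positive order, $op(a):Q^{m}(\mathbb{R}^{n})^{\oplus q}\subset L^{2}(\mathbb{R}^{n})^{\oplus q}\to L^{2}(\mathbb{R}^{n})^{\oplus q}$ is an unbounded self-adjoint operator; its spectrum therefore lies in $\mathbb{R}$, so every $\lambda\in\Lambda\setminus\{0\}$ belongs to the resolvent set and $\lambda-op(a)$ is invertible.

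The only mildly subtle step is the first one, namely verifying that the principal symbol of the symbolic adjoint $a^{*}$ equals the pointwise Hermitian adjoint of $a_{(m)}$. This is, however, a standard consequence of the asymptotic formula for the adjoint in the Shubin calculus (the higher-order corrections all have order strictly less than $m$), so I do not expect any real obstacle.
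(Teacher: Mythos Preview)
Your proposal is correct and follows essentially the same approach as the paper: both deduce that $a_{(m)}(x,\xi)$ is Hermitian from $a=a^{*}$, conclude its spectrum is real (hence disjoint from $\Lambda$), and then invoke self-adjointness of the unbounded operator $op(a)$ to get condition $(A)$. Your version is in fact slightly more careful, making explicit that ellipticity excludes $0$ from $\mathrm{spec}(a_{(m)}(x,\xi))$ while $\Lambda\cap\mathbb{R}\subseteq\{0\}$, which is exactly what is needed since $0\in\Lambda$.
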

\begin{proof}
As $a$ is self-adjoint, $a=a^{*}$, we conclude that $a_{(m)}(x,\xi)=a_{(m)}(x,\xi)^{\dagger}$,
where $\dagger$ indicates the adjoint of a matrix in $\mathcal{B}(\mathbb{C}^{q})$.
Therefore $a_{(m)}(x,\xi)$ is a self-adjoint matrix for each $(x,\xi)$
and therefore the spectrum of the matrix $a_{(m)}(x,\xi)$ is contained
in $\mathbb{R}$. Hence \[
spec(a_{(m)}(x,\xi))\cap\Lambda=\emptyset,\,\,\,\forall|(x,\xi)|\ne0.\]

This proves the $\Lambda$-ellipticity. The $(A)$ condition is a
direct consequence of the fact that the spectrum of $op(a)$ is contained in the real axis.
\end{proof}
Now we state two results that are in essence independent of the class
of symbols that we use. They establish the connection between the Wodzicki trace and the $\eta$ and $\zeta$ function.

\begin{prop}
Let a symbol $a\in\Gamma_{cl}^{m}(\mathbb{R}^{n},\mathcal{B}(\mathbb{C}^{q}))$,
$m>0$, be a self-adjoint, elliptic symbol. We have
the equality of meromorphic functions,

\[
\lim_{z\to0}\left(\zeta_{\theta}(a,z)-\zeta_{\theta'}(a,z)\right)=2i\pi Res(\Pi_{\theta,\theta'}(a)),\]
\[
\mbox{res}_{s=0}\eta(op(a),s)=\frac{i}{\pi}\lim_{z\to k}(\zeta_{\uparrow}(a,z)-\zeta_{\downarrow}(a,z)).\]

Hence $\mbox{res}_{s=0}\eta(op(a),s)=2i\pi Res(\Pi_{\theta,\theta'}(a))$.\end{prop}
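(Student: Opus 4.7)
The plan is to establish the two displayed identities separately and then combine them to obtain $\mbox{res}_{s=0}\eta(op(a),s) = 2i\pi\, Res(\Pi_{\theta,\theta'}(a))$.

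For the first identity, I would write
\[
\zeta_{\theta}(a,z) - \zeta_{\theta'}(a,z) = TR(a_{\theta}^{-z} - a_{\theta'}^{-z})
\]
and analyze the symbolic difference by contour deformation. The branches $\lambda_{\theta}^{-z}$ and $\lambda_{\theta'}^{-z}$ of $\lambda^{-z}$ differ by a factor $e^{-2\pi i z}$ across the sector $\Lambda_{\theta,\theta'}$, so the standard keyhole manipulation of deforming $\Gamma_\theta$ into $\Gamma_{\theta'}$ through $\Lambda_{\theta,\theta'}$ should yield
\[
a_{\theta}^{-z} - a_{\theta'}^{-z} = (1 - e^{-2\pi i z})\, b(z),
\]
where $b(z)$ is a holomorphic family of classical symbols of the form $\frac{1}{2\pi i}\int_{\Gamma_{\theta,\theta'}} \lambda_\theta^{-z}(\lambda - a)^{-\sharp}\, d\lambda$ (rewritten using $a\sharp(a-\lambda)^{-\sharp}=I+\lambda(a-\lambda)^{-\sharp}$ so as to absorb the $\lambda^{-1}$ factor appearing in the sectorial projection), and direct comparison with Definition \ref{def:complexpowers} yields $b(0) = \pm\Pi_{\theta,\theta'}(a)$. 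Applying $TR$ and invoking Proposition \ref{pro:residuo de TR(q sharp a -z)}, which gives $\mbox{res}_{z=0} TR(b(z)) = -Res(b(0))$, the simple zero $(1-e^{-2\pi i z}) = 2\pi i z + O(z^2)$ cancels the simple pole of $TR(b(z))$ at $z=0$, producing the finite limit $2i\pi\, Res(\Pi_{\theta,\theta'}(a))$.

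For the second identity, since $a$ is self-adjoint elliptic with $m>0$, $op(a)$ has real discrete spectrum, so any angle $\theta \in (0,\pi)$ defines $\zeta_\uparrow$ and any angle $\theta \in (-\pi,0)$ defines $\zeta_\downarrow$. From the spectral representation of the complex powers, a positive eigenvalue $\lambda$ contributes $\lambda^{-z}$ to both zetas, whereas a negative eigenvalue $-|\lambda|$ contributes $e^{-i\pi z}|\lambda|^{-z}$ to $\zeta_\uparrow$ and $e^{i\pi z}|\lambda|^{-z}$ to $\zeta_\downarrow$ (the sign in the phase recording on which side of the real axis the branch cut lies). Hence
\[
\zeta_\uparrow(a,z) - \zeta_\downarrow(a,z) = -2i\sin(\pi z)\sum_{\lambda_j < 0} |\lambda_j|^{-z},
\]
an identity that extends meromorphically in $z$. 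Comparing with $\eta(op(a),s) = \sum_{\lambda_j > 0} \lambda_j^{-s} - \sum_{\lambda_j < 0}|\lambda_j|^{-s}$, and using $\sin(\pi z) = \pi z + O(z^3)$ near $0$, the residue of $\eta$ at $s=0$ equals $\tfrac{i}{\pi}\lim_{z\to 0}(\zeta_\uparrow - \zeta_\downarrow)$. Substituting the first identity (with $\theta \in (0,\pi)$ and $\theta'\in(-\pi,0)$) yields the stated formula.

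The main obstacle will be rigorously justifying the contour-deformation identity $a_{\theta}^{-z} - a_{\theta'}^{-z} = (1 - e^{-2\pi i z}) b(z)$ together with the identification of $b(0)$ in terms of $\Pi_{\theta,\theta'}(a)$, ensuring that every step lives in the appropriate class of classical symbols. The parametrix bounds on $(\lambda-a)^{-\sharp}$ recalled in Section 4 are essential both for convergence of the integrals and for the deformation of $\Gamma_\theta$ into $\Gamma_{\theta'}$ across the sector. A secondary technical point is the meromorphic extension of $\eta$ with a simple pole at $0$ and the promotion of the formal spectral sum identity to an equality of meromorphic functions; both follow from the Kontsevich-Vishik machinery already developed for the $\zeta$ function.
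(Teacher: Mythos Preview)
Your proposal is correct in structure and follows precisely the route of Wodzicki, Shubin and Ponge that the paper's own proof invokes by reference; the paper does not reprove these identities but simply cites \cite{Wodzicki2}, \cite{Shubin} and \cite{Ponge} and asserts that the arguments transfer verbatim to the Shubin calculus once the parametrix estimates and the Kontsevich--Vishik machinery of Sections~4--5 are in place. Your sketch of the contour-deformation factorization $a_\theta^{-z}-a_{\theta'}^{-z}=(1-e^{-2\pi i z})b(z)$ with $b(0)=\Pi_{\theta,\theta'}(a)$ modulo $\Pi_0(a)$, together with the spectral splitting of $\zeta_\uparrow-\zeta_\downarrow$ over positive and negative eigenvalues, is exactly the content of those references, so you are not proposing a different method but rather unpacking the citation.

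Two small cautions if you carry this out in full. First, when you invoke Proposition~\ref{pro:residuo de TR(q sharp a -z)} for $TR(b(z))$, note that the order function of $b(z)$ is $-mz$ rather than $z+N$, so the residue acquires a factor $1/m$ relative to the statement there; this cancels correctly against the derivative of $1-e^{-2\pi i z}$ only after tracking all constants, and one also needs the auxiliary fact that $\mathrm{res}_{z=0}\zeta(|a|,z)=\frac{1}{m}\,Res(I-\Pi_0(a))=0$ to pass from the partial sum $\sum_{\lambda_j<0}|\lambda_j|^{-z}$ to $\eta$. Second, the identification $b(0)=\Pi_{\theta,\theta'}(a)$ requires the algebraic step $a\sharp(a-\lambda)^{-\sharp}=1+\lambda(a-\lambda)^{-\sharp}$ you mention, plus the observation that the contribution of the small arc near $0$ in $\Gamma_{\theta,\theta'}$ vanishes as $z\to 0$; both are routine but should be stated. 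With these bookkeeping points addressed, your argument is complete and coincides with the cited proofs.
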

\begin{proof}
The first equality was proved by Wodzicki \cite{Wodzicki2} and the second was first stated by Shubin \cite{Shubin}. Ponge also proves them with details in \cite{Ponge}. These results were obtained for pseudodifferential operators on compact manifolds. However the proofs work equally well also for Shubin operators as can be seen using our results and the proofs in Section 4 of \cite{Ponge}. 
\end{proof}
The above proposition, together with Theorem \ref{thm:wodzicki trace =0000E9 zero}
implies the regularity of the $\eta$ function:
\begin{thm}
Let $a\in\Gamma_{cl}^{m}(\mathbb{R}^{n},\mathcal{B}(\mathbb{C}^{q}))$,
$m>0$, be an elliptic symbol and self-adjoint symbol. Then the $\eta$ function associated to $a$
is regular at $0$.
\end{thm}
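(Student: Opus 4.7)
The plan is to combine the two main ingredients developed in the previous sections: the identity relating the residue of $\eta$ at $0$ to the Wodzicki trace of a sectorial projection, and the vanishing of the Wodzicki trace on idempotents of $\Gamma_{cl}^{0}(\mathbb{R}^{n},\mathcal{B}(\mathbb{C}^{q}))$. After that, the argument is essentially a short corollary.

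First, I would choose admissible sectors for the construction of a sectorial projection. Since $a$ is self-adjoint and elliptic, the principal symbol $a_{(m)}(x,\xi)$ is a Hermitian matrix for each $(x,\xi)$, so its spectrum is real. By the preceding proposition, any sector $\Lambda_{\uparrow}$ contained in the open upper half-plane and any sector $\Lambda_{\downarrow}$ contained in the open lower half-plane provide $\Lambda$-ellipticity for $a$, and both satisfy assumption $(A)$. Pick angles $\theta$ and $\theta'$ whose rays lie in $\Lambda_{\uparrow}$ and $\Lambda_{\downarrow}$, respectively; with these choices the sectorial projection $\Pi_{\theta,\theta'}(op(a))$ of Definition \ref{def:complexpowers} is well defined.

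Next, by Theorem \ref{thm:propriedades complex and projection}, the sectorial projection $\Pi_{\theta,\theta'}(op(a))$ is idempotent, and its symbol $\Pi_{\theta,\theta'}(a)$ belongs to $\Gamma_{cl}^{0}(\mathbb{R}^{n},\mathcal{B}(\mathbb{C}^{q}))$. Hence $\Pi_{\theta,\theta'}(a)$ is exactly an idempotent Shubin pseudodifferential operator of zero order, to which Theorem \ref{thm:wodzicki trace =0000E9 zero} applies. That theorem, proved via the $K$-theoretic computation showing that $[I]_{0}$ generates $K_{0}$ of the algebra and that $\mathrm{Res}(1)=0$, gives
\[
\mathrm{Res}(\Pi_{\theta,\theta'}(a))=0.
\]

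Finally, I would invoke the preceding proposition, which yields
\[
\mathrm{res}_{s=0}\,\eta(op(a),s)=2i\pi\,\mathrm{Res}(\Pi_{\theta,\theta'}(a))=0.
\]
Since the meromorphic extension of $\eta(op(a),\cdot)$ has at worst a simple pole at $s=0$, vanishing of the residue shows that $s=0$ is in fact a regular point, completing the proof. There is really no obstacle at this stage: every difficult step has already been absorbed into the earlier results. The only point that deserves a line of explicit checking is that the sectors from the previous proposition are indeed of the form required by Definition \ref{def:complexpowers} and by the statement of the $\eta$--$\zeta$ identity, which is immediate once self-adjointness of the principal symbol is noted.
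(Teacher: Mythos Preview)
Your proposal is correct and follows exactly the same approach as the paper: combine the identity $\mathrm{res}_{s=0}\eta(op(a),s)=2i\pi\,\mathrm{Res}(\Pi_{\theta,\theta'}(a))$ from the preceding proposition with Theorem~\ref{thm:wodzicki trace =0000E9 zero} on the vanishing of the Wodzicki trace on idempotents. The paper's own proof is just the one-line observation that these two results together yield the conclusion; you have simply spelled out the details.
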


{\section*{Acknowledgements}  The author would like to thank Severino Melo and  Elmar Schrohe for fruitful discussions and the Brazilian agency CNPq (Processo número 142185/2007-8) for financial support.  

\bibliographystyle{amsplain}

\end{document}